\newtheorem{lem}{Lemma}[section]
\newtheorem{thrm}[lem]{Theorem}
\newtheorem{prop}[lem]{Proposition}
\theoremstyle{definition}
\theoremstyle{remark}
\newcommand{\eq}[2]{\begin{equation}\label{#1}#2\end{equation}}
\renewcommand{\Re}{\ensuremath{\operatorname{Re}}}
\renewcommand{\Im}{\ensuremath{\operatorname{Im}}}
\renewcommand{\epsilon}{\varepsilon}
\newcommand{\mb}{\mathbf}
\newcommand{\mr}{\mathrm}
\newcommand{\mf}{\mathfrak}
\numberwithin{equation}{section}
\newcommand{\R}{\ensuremath{\mathbb{R}}}
\newcommand{\C}{\ensuremath{\mathbb{C}}}
\newcommand{\D}{\ensuremath{\mathbb{D}}}
\newcommand{\Z}{\ensuremath{\mathbb{Z}}}
\newcommand{\Schwartz}{\ensuremath{\mathscr{S}}}
\DeclareMathOperator{\sgn}{sgn}
\newcommand{\<}{\ensuremath{\langle}}
\renewcommand{\>}{\ensuremath{\rangle}}
\newcommand{\p}{\ensuremath{\partial}}
\newcommand{\bpf}{\begin{proof}}
\newcommand{\epf}{\end{proof}}
\newcommand{\qtq}[1]{\quad\text{#1}\quad}
\DeclareMathOperator{\tr}{tr}
\let\det=\undefined\DeclareMathOperator{\det}{det}
\newcommand{\op}{\mr{op}}
\newcommand{\I}{\mf I}
\newcommand{\norm}[1]{{\vert\kern-0.25ex\vert\kern-0.25ex\vert#1\vert\kern-0.25ex\vert\kern-0.25ex\vert}}
\newcommand{\bbo}{\mathbbm 1}
\DeclareMathOperator{\arcsinh}{arcsinh}
\title[Microscopic conservation laws for integrable lattice models]{Microscopic conservation laws\\for integrable lattice models}
\author[B.~Harrop-Griffiths]{Benjamin Harrop-Griffiths}
\address{Benjamin Harrop-Griffiths\\
Department of Mathematics\\
University of California, Los Angeles, CA 90095, USA}
\email{harropgriffiths@math.ucla.edu}
\author[R.~Killip]{Rowan Killip}
\address{Rowan Killip\\
Department of Mathematics\\
University of California, Los Angeles, CA 90095, USA}
\email{killip@math.ucla.edu}
\author[M.~Vi\c san]{Monica Vi\c{s}an}
\address{Monica Vi\c{s}an\\
Department of Mathematics\\
University of California, Los Angeles, CA 90095, USA}
\email{visan@math.ucla.edu}
\begin{document}

\begin{abstract}
We consider two discrete completely integrable evolutions: the Toda Lattice and the Ablowitz--Ladik system.  The principal thrust of the paper is the development of microscopic conservation laws that witness the conservation of the perturbation determinant under these dynamics.  In this way, we obtain discrete analogues of objects that we found essential in our recent analyses of KdV, NLS, and mKdV.

In concert with this, we revisit the classical topic of microscopic conservation laws attendant to the (renormalized) trace of the Green's function.
\end{abstract}

\maketitle

\section{Introduction}

A typical property of completely integrable systems is that they can be expressed via a Lax pair or zero curvature condition:
\begin{align}\label{E:LP}
\tfrac{d\ }{dt} L(t;\kappa) = [ P(t;\kappa), L(t,\kappa) ].
\end{align}
Here $L$ and $P$ are typically operator pencils in the spectral parameter $\kappa$.  Such will be the case for the concrete models we discuss, namely, the Korteweg--de Vries equation \eqref{KdV}, the nonlinear Schr\"odinger equation \eqref{NLS}, the (complex) modified Korteweg--de Vries equation \eqref{mKdV}, the Toda Lattice \eqref{TL}, and the Ablowitz--Ladik system \eqref{AL eqn}.  At this moment, however, we do not wish to narrow our focus unnecessarily.  The question that stimulated this paper remains interesting for any system admitting the representation \eqref{E:LP}. 

The evolution \eqref{E:LP} guarantees that for each $\kappa$, the conjugacy class of $L(t;\kappa)$ is independent of time.  In particular, class functions such as the trace and determinant will be conserved --- if they are defined!   For many models, however, they are not;  it is necessary to renormalize.

The basic form of renormalization rests on a trivial static solution of the integrable system.  For typical models, this would be the zero solution; we deliberately choose not to specify it as such, because in some models (such as spin chains) there is no such thing as the zero solution.

With these ideas set, it is easy to imagine that we have the following:
\begin{gather}\label{conserve det}
\tfrac{d\ }{dt} \log \det\bigl[ L(t;\kappa) / L_0(\kappa) \bigr] = 0,  \\[1mm]
\tfrac{d\ }{dt} \tr \bigl\{ L(t;\kappa)^{-1}  -  L_0(\kappa)^{-1} \bigr\} = 0. \label{conserve tr}
\end{gather}
Here $L_0(\kappa)$ stands for the (time-independent) Lax operator associated to the chosen trivial solution.  We take the logarithm in \eqref{conserve det} because this has proved to be the more useful quantity (compared to the pure determinant).

We merely `imagine' that \eqref{conserve det} and \eqref{conserve tr} hold because, while this is easily made rigorous in the case of finite-dimensional Hamiltonian ODE with corresponding finite-dimensional Lax operators, there are non-trivial analytical obstacles to be overcome in the infinite-dimensional setting:  Are the dynamics well-posed? Are the trace or determinant well-defined?
 
The claims \eqref{conserve det} and \eqref{conserve tr} may be termed \emph{macroscopic} conservation laws, mirroring notions such as the conservation of energy.  In this paper, we seek \emph{microscopic} conservation laws, which provide a local-in-space explanation of conservation: energy is conserved globally because the time derivative of the energy density is the divergence of (minus) the energy current.

While macroscopic conservation laws can be recovered from microscopic versions by integrating the density over the whole space, the key advantage of the latter stems from the possibility of interposing a weight.  We know of no better witness to the power of this idea than Kato's seminal local smoothing estimate for the Korteweg--de Vries equation \cite{Kato}:  Solutions $q(t,x)$ to
\begin{equation}\label{Kato:mcl}
q_t = - q_{xxx} + 6 qq_x \qtq{also satisfy} \bigl(q^2\bigr)_t + \bigl(-3(q_x)^2-4q^3+(q^2)_{xx}\bigr)_x = 0.
\end{equation}
Integrating the latter against a sigmoid function, Kato proved
\begin{equation}\label{Kato:smooth}
\int_0^1\int_0^1 |q_x(t,x)|^2\,dx\,dt \lesssim \| q(0) \|_{L^2}^2 + \| q(0) \|_{L^2}^6.
\end{equation}
This gain of one spatial derivative explains the `smoothing effect' appellation.  The restriction to a finite spatial window is essential here; the Hamiltonian nature of the equation shows there can be no global gain in regularity.

A crucial ingredient in Kato's argument is the coercivity of the dominant term, $-3(q_x)^2$, in the current.  Coercivity has been an important factor in the choice and deployment of such local conservation laws in the recent papers \cite{BKV,KV,HGKV} on continuum models.  Moreover, the laws we employed in those papers remain meaningful at low-regularity, including for $H^{s}$-valued solutions with $s$ non-integer and/or negative.  By comparison, the traditional polynomial conservation laws are useful only for integer values of $s\geq 0$.

Another benefit of microscopic conservation laws over many of their macroscopic counterparts is that they remain meaningful for solutions that do not decay at infinity.  The recent paper \cite{KMV}, which establishes the invariance of white noise under \eqref{KdV}, gives one example of how such laws can be deployed in this way. 

One microscopic representation of the conservation law \eqref{conserve tr} presents itself immediately: the trace is fundamentally an integral (or sum) over space and the conserved density may be taken to be the diagonal entries of the Green's function.  This is by no means a new observation.  This idea and many fruitful consequences may be found throughout the literature on integrable systems.  One early application, \cite{MR0508337}, is to the derivation of polynomial conservation laws for KdV from the asymptotic expansion of the diagonal Green's function (as $\kappa\to\infty$).

Likewise, a systematic approach to the polynomial conservation laws for the Toda and Ablowitz--Ladik evolutions that illustrates their connection to the diagonal Green's function, has been developed in \cite{GH-Norway,MR2416645}.  

We should pause to note that the diagonal Green's function is not the only microscopic representation of \eqref{conserve tr}.  Indeed, any microscopic conservation law may freely be modified by any divergence-free vector field in spacetime.  But then how is one to choose the `correct' representative?  We do not claim that there is a unique answer; nevertheless, our investigations in this direction have led us to value coercivity and quantities amenable to operator-theoretic analysis.  

At last, we reach the central question of this paper: Can we find microscopic representations of \eqref{conserve det}?  For models such as KdV and NLS, it has been shown that the perturbation determinant coincides with the reciprocal of the transmission coefficient.  In this guise, the conservation law has been studied extensively.  In \cite{MR0303132}, for example, a microscopic representation based on the Jost solution is demonstrated (in the KdV setting) and shown to be a generating function for the polynomial conservation laws.  The papers \cite{GH-Norway,MR2416645} carry the same philosophy over to the discrete case.   In this way, one may say that our central problem has been solved.  However, as we have discussed earlier, microscopic representations of a macroscopic conservation law are far from unique.  In this paper, we will be presenting a \emph{different} solution, one that is informed by successes in applying these laws to the well-posedness problem.

The very idea of Jost solutions is already restricted to the class of integrable models whose Lax operators admit a scattering theory.  Traditionally, at least, this presupposes infinite volume with rapidly decreasing initial data.  The Green's function and the perturbation determinant transcend such restrictions.  In \cite{KV,MR3820439}, for example, we see the seamless manner in which they can be applied to \eqref{KdV} posed both on the circle and on the line.  By comparison, we see in \cite{MR3874652}, for example, how involved the analysis of low regularity conservation laws becomes when based on the Jost solutions.

The reader may well ask why we seek microscopic representations of \eqref{conserve det}.  Why are we not satisfied with \eqref{conserve tr} and the diagonal Green's function? Our answer stems from a number of empirical observations we made while working on the well-posedness problem for \eqref{KdV}, \eqref{NLS}, and \eqref{mKdV}.  We observed that our microscopic conservation laws attendant to \eqref{conserve det}, which will be reviewed in Sections~\ref{S:KdV} and~\ref{S:NLS}, showed better coercivity than enjoyed by the diagonal Green's function.  This was crucial in our analyses in \cite{BKV,KV,HGKV}.  Moreover, we found that we are able to express these new microscopic conservation laws via nonlinear functions of the diagonal Green's function; thus there is no new overhead of complexity.  Furthermore, the functional derivatives of macroscopic conservation laws also enter into our analysis.  In the case of the perturbation determinant this leads to the diagonal Green's function --- which is already needed in the investigation.  By comparison, the functional derivative of the trace of the resolvent involves the diagonal entries of the square of the resolvent; this involves the whole Green's function and so adds another layer of complexity.

In this paper, we will present microscopic conservation laws for the Toda Lattice (Section~\ref{S:Toda}) and Ablowitz--Ladik system (Section~\ref{S:AL}) that mimic those that we found so useful in our investigations of continuum models.  Moreover, we will see that they share the favorable coercivity properties of their continuum analogues.

In fact, the conservation laws derived in this paper may be seen as more fundamental than their continuum counterparts.  Early in the history of these lattice models, it was discovered that they could reproduce continuum models in certain limiting regimes; see, e.g., \cite{AL1,AL2,Toda1970}.  It is not difficult to verify that under the formal limiting processes outlined in these papers, the discrete conservation laws obtained in this paper reduce to their continuum counterparts.  On the other hand, just as there is no systematic method of deducing completely integrable discrete models from their continuum analogues, so there is no way of passing from the continuum laws to their discrete counterparts.  Rather, we were forced to reason by analogy and employ a little trial and error.

This question of passing to the continuum limit touches upon one of our motivations in developing the results presented in this paper.  One driving force behind developing a low-regularity theory of partial differential equations is to handle the small but wild oscillations attendant to any system in (or near) thermal equilibrium.  Concretely, one may ask how the temperature of a crystal affects the propagation of large-scale solitary waves.  To tackle this type of question, which rests on taking a continuum limit for solutions with no smoothness whatsoever, it seems necessary to develop discrete manifestations of tools with proven efficacy in the low-regularity continuum regime. 

When embarking on this project, we also hoped to find a simple unifying principle that would essentially automate the construction of microscopic laws attendant to \eqref{conserve det} for general Lax operators.  This ambition has not been realized.  Indeed, the results presented here only serve to indicate that the matter is rather more subtle than we had initially hoped.  The diagonal Green's function is ubiquitous; however, the plethora of nonlinear ways in which it manifests seems to defy a universal explanation.

The paper is organized as follows: In Sections~\ref{S:KdV} and~\ref{S:NLS}, we recapitulate results for the KdV and NLS hierarchies, respectively.  This sets the stage for our treatments of the Toda Lattice in Section~\ref{S:Toda} and the Ablowitz--Ladik system in Section~\ref{S:AL}.

\subsection*{Acknowledgements} R.~K. was supported by NSF grant DMS-1856755 and M.~V. by grant DMS-1763074.

\section{KdV}\label{S:KdV} 

In this section, we discuss known microscopic conservation laws for the Korteweg--de Vries hierarchy.  While these results have proven crucial in proving low-regularity well-posedness results \cite{BKV,KMV,KV}, we are content to present the key identities in the context of Schwartz-class solutions.  Indeed, the low-regularity cases are most easily treated by continuous extension from that case.

The Korteweg--de Vries equation
\begin{align}\label{KdV}\tag{KdV}
q_t = - q_{xxx} + 6 qq_x
\end{align}
is a Hamiltonian equation with respect to the Poisson structure
\begin{align}\label{E:PS}
\{ F, G \} := \int  \frac{\delta F}{\delta q}(x) \biggl(\frac{\delta G}{\delta q}\biggr) '(x) \,dx.
\end{align}

As Lax operator, we adopt the standard choice
$$
L(\kappa) := -\partial_x^2 + q+\kappa^2,
$$
interpreted as an unbounded self-adjoint operator on $L^2(\R)$ with form domain $H^1(\R)$.  The second member of the Lax pair is
$$
P := - 4 \partial_x^3 + 3\bigl(\partial_x q + q\partial_x \bigr).
$$

For $\kappa$ sufficiently large (depending on $q$), the Lax operator $L(\kappa)$ is invertible and the inverse is represented by a continuous kernel, the Green's function.  The object of central interest in this section is the diagonal Green's function:
$$
g(x;\kappa,q) := \bigl\langle\delta_x , L(\kappa)^{-1} \delta_x\bigr\rangle .
$$
When $q\equiv 0$, which is the natural static solution in this hierarchy, we have the Lax operator $L_0=-\partial_x^2 + \kappa^2$. The corresponding diagonal Green's function is $g_0\equiv\frac1{2\kappa}$.

With these preliminaries set, we have the following identities:
\begin{align}
\tr \bigl\{ L(t;\kappa)^{-1}  -  L_0(\kappa)^{-1} \bigr\} &= \int g(x;\kappa,q) - \tfrac{1}{2\kappa}\,dx \label{KdV tr} \\
-\log \det\bigl[ L(t;\kappa) / L_0(\kappa) \bigr] &= \int  \kappa -\tfrac{1}{2g(x;\kappa,q)}\,dx \label{KdV det}
\end{align}
valid for $q\in\Schwartz$ and $\kappa\gg 1$.  Both of these lead readily to microscopic conservation laws.  However, both also prove unsatisfactory: convergence requires too much decay.  For example, if $q\leq 0$, then $q\in L^1$ is required.

Our remedy is to use that $\int q\,dx$ is preserved under the KdV hierarchy to renormalize in a prudent way:

\begin{prop}\label{P:KdV}
Given $q\in\Schwartz$ and $\kappa\gg 1$, we define
\begin{align*}
\gamma(x;\kappa) := g(x;\kappa) - \tfrac{1}{2\kappa} + \tfrac{1}{4\kappa^2} e^{-2\kappa|\cdot|} * q
	\qtq{and}
\rho(x;\kappa) := \kappa -\tfrac{1}{2g(x;\kappa)} + \tfrac{1}{2} e^{-2\kappa|\cdot|} * q .
\end{align*}
Both constitute conserved densities under the KdV hierarchy.  For example, under the KdV flow, $q_t = -q_{xxx}+6qq_x$, we have
\begin{gather}
\gamma_t = \Bigl(  -\gamma_{xx} + \tfrac{3}{4\kappa^2} \bigl[e^{-2\kappa|\cdot|} * q^2\bigr]  + 3g_{xx} - 6qg-12\kappa^2g+6\kappa \Bigr)_x , \label{KdV gamma dot}\\
\rho_t = \Bigl(  \tfrac{3}{2} \bigl[e^{-2\kappa|\cdot|} * q^2\bigr] +  2 q \bigl[\kappa -\tfrac{1}{2g}\bigr] -4\kappa^2\rho  \Bigr)_x .\label{KdV rho dot}
\end{gather}
\end{prop}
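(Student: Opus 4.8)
The plan is to use the Lax structure to express $g_t$ as a spatial divergence, and then to absorb the convolution correction $\tfrac{1}{4\kappa^2}e^{-2\kappa|\cdot|}*q$ (resp.\ $\tfrac12 e^{-2\kappa|\cdot|}*q$) by feeding the equation $q_t=-q_{xxx}+6qq_x$ through the kernel identity $(-\partial_x^2+4\kappa^2)e^{-2\kappa|\cdot|}=4\kappa\,\delta$. I would begin by evolving the resolvent: writing $G=L(\kappa)^{-1}$ with (symmetric, continuous) integral kernel $G(x,y)$ and differentiating $GL=\mathrm{Id}$ in time, the Lax equation $\dot L=[P,L]$ yields $\dot G=[P,G]$, so that $g_t(x)=[P,G](x,x)$. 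Because $P=-4\partial_x^3+3(\partial_x q+q\partial_x)$ is formally skew-adjoint, $P^t=-P$, the kernel of $GP$ is $-P_yG(x,y)$ and hence the kernel of the commutator is $(P_x+P_y)G(x,y)$, with $P_x,P_y$ denoting $P$ acting in each variable.

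The crux is to evaluate $(P_x+P_y)G$ on the diagonal. Using $(-\partial_x^2+q(x)+\kappa^2)G=\delta(x-y)$ and its $y$-counterpart to trade the third derivatives $\partial_x^3G,\partial_y^3G$ for lower ones, the two $\delta'(x-y)$ contributions enter $-4(\partial_x^3+\partial_y^3)G$ with opposite signs and cancel; every surviving diagonal derivative then collapses onto $g$ and $g'=(\partial_x+\partial_y)G|_{y=x}$. This is the step I expect to demand the most care, since $\partial_x G$ jumps across the diagonal and one must check that these singular pieces genuinely cancel before taking the limit $y\to x$. The result is the raw identity
\[
g_t=2qg'-4\kappa^2g'-2q'g.
\]

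For $\gamma$ the nonlinearity of $g$ must now be exploited. Writing $g$ as a constant multiple of the product of the two Weyl solutions of $L\psi=0$ shows that $g$ obeys the diagonal Green's function equation $(g')^2-2gg''+4(q+\kappa^2)g^2=1$, equivalently $g'''=4(q+\kappa^2)g'+2q'g$; substituting this for $g'''$ recasts the raw identity as the divergence $g_t=\partial_x\bigl(2g''-6qg-12\kappa^2g\bigr)$. For the correction, the kernel identity gives $\partial_x^2(e^{-2\kappa|\cdot|}*q)=4\kappa^2(e^{-2\kappa|\cdot|}*q)-4\kappa q$, whence $\tfrac{1}{4\kappa^2}e^{-2\kappa|\cdot|}*q_t$ is itself a divergence with current $-e^{-2\kappa|\cdot|}*q+\tfrac{q}{\kappa}+\tfrac{3}{4\kappa^2}e^{-2\kappa|\cdot|}*q^2$. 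Adding the two currents, and rewriting $-\gamma_{xx}$ through the same kernel identity, reproduces \eqref{KdV gamma dot}; the additive $6\kappa$ there is immaterial, being a constant whose $x$-derivative vanishes (it simply renormalizes $-12\kappa^2g$ so that the current decays at infinity).

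For $\rho$ the reciprocal $1/g$ is more convenient, and no appeal to the Green's function equation is needed: the raw identity gives $\partial_t\bigl(\kappa-\tfrac{1}{2g}\bigr)=\tfrac{g_t}{2g^2}=\partial_x\bigl(\tfrac{2\kappa^2-q}{g}\bigr)$ by direct differentiation. Handling $\tfrac12 e^{-2\kappa|\cdot|}*q_t$ exactly as above and collecting terms yields \eqref{KdV rho dot}, again up to an irrelevant additive constant ($-4\kappa^3$) in the current. Throughout, the standing assumptions $q\in\Schwartz$ and $\kappa\gg1$ ensure that $L(\kappa)$ is invertible with a smooth, strictly positive diagonal Green's function, legitimizing the division by $g$ and all the distributional manipulations above.
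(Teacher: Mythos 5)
Your proposal is correct, and its pivotal unproved claim checks out: the commutator kernel $(P_x+P_y)G$ does extend continuously to the diagonal with the value $2(q-2\kappa^2)g'-2q'g$. This is cleanest to verify from the factorization $G(x,y)=\psi_-(x\wedge y)\psi_+(x\vee y)/W$ in terms of the Weyl solutions, since any solution of $L\psi=0$ satisfies $P\psi=-q'\psi+(2q-4\kappa^2)\psi'$; alternatively, the distributional worry you flag can be bypassed entirely by writing $\dot G=-G[P,L]G=-G\,\dot q\,G$, whose kernel is manifestly continuous, and computing one-sided limits (which agree by the symmetry of $G$). Your subsequent algebra is also correct: the identity $(g')^2-2gg''+4(q+\kappa^2)g^2=1$ (normalized so that $q\equiv 0$, $g\equiv\tfrac1{2\kappa}$ satisfies it) converts the raw identity into $g_t=\partial_x\bigl(2g''-6qg-12\kappa^2g\bigr)$, the quotient rule alone handles $\kappa-\tfrac1{2g}$, and your bookkeeping of the convolution terms via $(-\partial_x^2+4\kappa^2)e^{-2\kappa|\cdot|}=4\kappa\delta$ together with the additive constants $6\kappa$ and $-4\kappa^3$ reproduces \eqref{KdV gamma dot} and \eqref{KdV rho dot} exactly. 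Be aware, however, that there is no in-paper proof to compare against: the paper disposes of Proposition~\ref{P:KdV} with a citation to \cite{KV}, where these identities are developed from first principles. Your argument is thus a self-contained reconstruction, and it is worth noting that it runs parallel to the strategy the paper \emph{does} spell out in the discrete setting of Theorem~\ref{T:Toda}: there \eqref{Toda dtG} plays the role of your $\dot G=[P,G]$, and the Green's function identities \eqref{QuadraticID}, \eqref{D1}, \eqref{U1} play the role of your quadratic identity for the diagonal Green's function. The only polish I would ask for is to execute the diagonal-limit step explicitly (via either of the two routes above) rather than asserting the cancellation of the $\delta'$ contributions, since that is the one place where the sketch defers a computation.
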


All of these claims are developed from first principles in \cite{KV}.  The renormalization of the diagonal Green's function chosen here matches the head of a Neumann series expansion:
$$
\gamma(x;\kappa,q) = \bigl\langle\delta_x , \bigl[ L(\kappa)^{-1}  -  L_0(\kappa)^{-1} + L_0(\kappa)^{-1} q  L_0(\kappa)^{-1} \bigr] \delta_x\bigr\rangle.
$$
It guarantees that $\gamma\in L^1$ for $\kappa$ large, even when $q\in H^{-1}$.  In this way it is superior to simply subtracting a numerical multiple of $q$ itself.

We have not seen the connection between the perturbation determinant and the reciprocal of $g$ observed prior to \cite{KV}.  This renormalization was also introduced there; it not only guarantees that $\rho\in L^1$ for $q\in H^{-1}$, but even ensures that $\rho(x)$ is a non-negative convex function of $q$ for each choice of $x$.  

\section{NLS \& mKdV}\label{S:NLS} 

In this section we review the microscopic conservation laws used in our work \cite{HGKV} on optimal well-posedness for the nonlinear Schr\"odinger equation
\eq{NLS}{\tag{NLS}
i\frac d{dt}q = - q'' \pm  2|q|^2q
}
and the (Hirota)  complex modified Korteweg--de Vries equation
\eq{mKdV}{\tag{mKdV}
\frac d{dt}q = -  q''' \pm 6|q|^2q'.
}
Both \eqref{NLS} and \eqref{mKdV} are Hamiltonian equations with respect to the following Poisson structure (written using Wirtinger derivatives)
\begin{align*}
\{F,G\} := \tfrac1i \int  \tfrac{\delta F}{\delta q}\tfrac{\delta G}{\delta r} - \tfrac{\delta F}{\delta r}\tfrac{\delta G}{\delta q} \,dx,
\end{align*}
where \(F,G\colon \Schwartz\rightarrow \R\) and \(r := \pm \bar q\).

Lax pairs for \eqref{NLS} and \eqref{mKdV} were introduced in \cite{MR450815,MR0406174}.  Our conventions in \cite{HGKV} were
\begin{equation}\label{Intro AKNS L}
L(\kappa) := \begin{bmatrix}\kappa - \p & q\\-r&\kappa + \p\end{bmatrix} \qtq{and} P:=i \begin{bmatrix}2\p^2-qr & -q\p-\p q\\ r\p+\p r & -2\p^2+qr\end{bmatrix}
\end{equation}
for \eqref{NLS} and 
\begin{align*}
L(\kappa) := \begin{bmatrix}\kappa - \p & q\\-r&\kappa + \p\end{bmatrix} \qtq{and}
P:=\begin{bmatrix} - 4\p^3 + 3 qr\p + 3 \p qr  & 3q'\p +3\p q' \\ 3r'\p+3\p r' & -4\p^3 + 3 qr\p + 3 \p qr \end{bmatrix}
\end{align*}
for \eqref{mKdV}.

By direct computation, one finds that
\[
L_0(\kappa)^{-1} = \begin{bmatrix}(\kappa - \p)^{-1} & 0\\0&(\kappa + \p)^{-1}\end{bmatrix}
\]
admits the integral kernel 
\begin{equation}\label{G_0}
G_0(x,y;\kappa) = e^{-\kappa|x - y|}\begin{bmatrix}\bbo_{\{x<y\}}&0\\0&\bbo_{\{y<x\}}\end{bmatrix} \quad\text{for $\kappa >0$}.
\end{equation}
For $\kappa$ sufficiently large depending on $q$, $L(\kappa)$ is invertible as an operator on $L^2(\R;\C^2)$ and the inverse $L(\kappa)^{-1}$ admits a matrix-valued integral kernel $G(x,y;\kappa)$.
Due to jump discontinuities, one cannot expect to restrict $G(x,y;\kappa)$ to the $x=y$ diagonal in a meaningful way.  However, both $G-G_0$ and \(G_{11} + G_{22}\) (where subscripts indicate matrix entries) are continuous functions of $(x,y)\in \R^2$.  This allows us to unambiguously define the continuous function
\begin{align*}
\gamma(x;\kappa) :=\tr\bigl\{[G-G_0](x,x;\kappa)\bigr\}   = [G_{11}+ G_{22}](x,x;\kappa)  - 1.
\end{align*}
By definition, we then have
\[
\tr \bigl\{ L(t,\kappa)^{-1}  -  L_0(\kappa)^{-1} \bigr\} = \int \gamma(x;\kappa)\,dx,
\]
whenever \(q\in \Schwartz\) and $\kappa$ is taken sufficiently large (depending on $q$).

In~\cite{HGKV}, we found a corresponding density for the perturbation determinant. For \(q\in \Schwartz\) and \(\kappa\) sufficiently large (depending on $q$), we have
\[
\log \det\bigl[ L(t;\kappa) / L_0(\kappa) \bigr] = \int \rho(x;\kappa)\,dx,
\]
where the density $\rho$ is given by
$$
\rho(x;\kappa):=\frac{q(x)g_{21}(x;\kappa)-r(x)g_{12}(x;\kappa)}{2+\gamma(x;\kappa)}
$$
with $g_{12}(x;\kappa) =G_{12}(x,x;\kappa)$ and $g_{21}(x;\kappa) =G_{21}(x,x;\kappa)$.

We then have the following microscopic conservations laws for \eqref{NLS} and \eqref{mKdV}:

\begin{prop}[\!\!{\cite{HGKV}}]\label{P:NLS}
Given $q\in\Schwartz$ and $\kappa$ sufficiently large (depending on $q$), the expressions $\gamma$ and $\rho$ constitute conserved densities under the ZS--AKNS hierarchy.  In particular, under the \eqref{NLS} and \eqref{mKdV} flows
$$
\partial_t \gamma= \partial_x {}^\gamma\!j_\star \qtq{and} \partial_t \rho= \partial_x j_\star
$$
where 
\begin{align*}
{}^\gamma\!j_{\mr{NLS}}&= -i\bigl(2rg_{12} - 2qg_{21} - 4\kappa\gamma \bigr),\\
{}^\gamma\!j_{\mr{mKdV}}&=-\gamma'' +12\kappa(rg_{12} - qg_{21}) - 12\kappa^2\gamma + 6qr(1 + \gamma),\\
j_{\mr{NLS}}&= i\left(\tfrac{ q'\cdot g_{21} + r'\cdot g_{12}}{2 + \gamma} - qr + 2\kappa \rho\right),\\
j_{\mr{mKdV}}&= -\tfrac{ (q'' - 2q^2r)\cdot g_{21} - (r'' - 2r^2q)\cdot g_{12}}{2 + \gamma} + q'r - qr' + 2i\kappa j_{\mr{NLS}}.
\end{align*}
\end{prop}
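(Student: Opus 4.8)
The engine of the argument is that the Lax equation \eqref{E:LP} propagates not only the conjugacy class of $L(\kappa)$ but that of its resolvent. Differentiating $L\,L^{-1}=I$ in time and substituting $\dot L=[P,L]$ gives $\partial_t L(\kappa)^{-1}=-L^{-1}\dot L\,L^{-1}=[P,L(\kappa)^{-1}]$. Passing to integral kernels, and writing $P_x$ for the action of the matrix differential operator $P$ in the first slot and $P_y^{t}$ for the action of its formal transpose in the second, this reads $\partial_t G(x,y;\kappa)=(P_x-P_y^{t})G(x,y;\kappa)$. The key structural point is that, because $P$ is a genuine differential operator rather than merely an integral one, the restriction of $(P_x-P_y^t)G$ to the diagonal $x=y$ is automatically a total $x$-derivative of a quantity local in the diagonal data of $G$; this is precisely the mechanism that turns the macroscopic statements \eqref{conserve det} and \eqref{conserve tr} into the microscopic laws claimed here, and it is the source of the explicit currents.

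First I would assemble the spatial scaffolding. For $\kappa$ large the Neumann series for $L(\kappa)^{-1}$ around $L_0(\kappa)^{-1}$, whose kernel $G_0$ is displayed in \eqref{G_0}, converges and shows that $G-G_0$, $G_{11}+G_{22}$, $G_{12}$, and $G_{21}$ all extend continuously across the diagonal, legitimizing $\gamma$, $g_{12}$, $g_{21}$. Differentiating the defining relations $L_x G=\delta I$ and its transposed counterpart in $y$, and restricting to the diagonal from the continuous side, yields the closed first-order system
\begin{align*}
\gamma'=2\bigl(q\,g_{21}+r\,g_{12}\bigr),\qquad g_{12}'=2\kappa\,g_{12}+q\,(1+\gamma),\qquad g_{21}'=-2\kappa\,g_{21}+r\,(1+\gamma).
\end{align*}
These integrate to the algebraic constraint $(1+\gamma)^2-4\,g_{12}g_{21}=1$, equivalently $\gamma(2+\gamma)=4\,g_{12}g_{21}$, the boundary constant being fixed by decay at spatial infinity. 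This constraint is what produces, and allows one to control, the denominator $2+\gamma$ appearing in $\rho$.

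Next I would compute the time derivatives. Tracing the kernel identity $\partial_t G=(P_x-P_y^t)G$ and restricting to the diagonal gives $\partial_t\gamma=\partial_x\,{}^\gamma\!j_\star$ outright, with the current read off from $P$: for \eqref{NLS} only the second-order part of $P$ from \eqref{Intro AKNS L} contributes, while for \eqref{mKdV} its third-order part is responsible for the extra $\gamma''$ and $qr$ terms. For $\rho=(q\,g_{21}-r\,g_{12})/(2+\gamma)$ I would differentiate in time by the quotient rule and then substitute three inputs: $\partial_t q$ and $\partial_t r$ from \eqref{NLS} or \eqref{mKdV}; $\partial_t g_{12}$, $\partial_t g_{21}$, $\partial_t\gamma$ from the kernel identity; and finally the spatial relations above to eliminate bare $x$-derivatives. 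The variational identity $\delta\log\det[L/L_0]=\int\bigl(g_{21}\,\delta q-g_{12}\,\delta r\bigr)\,dx$, which identifies $g_{21}$ and $-g_{12}$ as the functional derivatives of the determinant, confirms that $q\,g_{21}-r\,g_{12}$ is the natural numerator and guides the bookkeeping.

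The crux is this last reduction for $\rho$. After all substitutions one is confronted with a lengthy rational expression in $q,r,\gamma,g_{12},g_{21}$, and the task is to recognize it as exactly $\partial_x j_\star$. The denominator must be handled by repeatedly invoking $\partial_x\log(2+\gamma)=\gamma'/(2+\gamma)$ together with $\gamma'=2(q\,g_{21}+r\,g_{12})$ and the constraint $\gamma(2+\gamma)=4\,g_{12}g_{21}$ in order to clear or absorb it; for \eqref{mKdV} the third-order structure of $P$ forces several nontrivial cancellations before a divergence emerges, which is reflected in the recursive appearance of $j_{\mr{NLS}}$ inside $j_{\mr{mKdV}}$. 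Verifying that every term failing to be a perfect $x$-derivative cancels identically is the main labor; once the spatial ODEs and the constraint are in hand, it is a purely algebraic, if delicate, matter.
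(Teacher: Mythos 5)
Your proposal is correct and follows essentially the same route as the source of this result: the paper itself states Proposition~\ref{P:NLS} without proof, citing \cite{HGKV}, and the proof there rests on exactly the ingredients you assemble --- the kernel identity for $[P,L(\kappa)^{-1}]$, the diagonal ODE system $\gamma'=2(qg_{21}+rg_{12})$, $g_{12}'=2\kappa g_{12}+q(1+\gamma)$, $g_{21}'=-2\kappa g_{21}+r(1+\gamma)$, and the constraint $(1+\gamma)^2-4g_{12}g_{21}=1$, followed by algebraic reduction to divergence form. This is also precisely the scheme the present paper uses for the discrete analogues in Theorems~\ref{T:Toda} and~\ref{T:AL}, where \eqref{Toda dtG}, \eqref{D1}, \eqref{U1} and \eqref{AL dtG}, \eqref{AL nD}, \eqref{AL nU} play the roles of your time-evolution and spatial identities.
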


The main benefits of the microscopic conservation law for \(\rho\) are its superior coercivity properties, which were crucial for our applications in \cite{HGKV}. Denoting the quadratic (in \(q\)) terms in the currents by \(j_\star^{[2]}\), we have
\begin{align*}
\int \Im j^{[2]}_{\mr{NLS}}(x;\kappa)\,dx &= \mp \int \tfrac{2\xi^2|\hat q(\xi)|^2}{4\kappa^2+\xi^2}\, d\xi, \\
 \int \Re j^{[2]}_{\mr{mKdV}}(x;\kappa)\,dx &= \pm \int \tfrac{6\kappa\xi^2|\hat q(\xi)|^2}{4\kappa^2+\xi^2}\, d\xi.
\end{align*}
This coercivity of the currents was crucial in proving local smoothing estimates (cf.~\ref{Kato:smooth}). Moreover, the density itself is also coercive,
\[
\int \Re\rho^{[2]}\,dx = \pm \int \tfrac{2\kappa|\hat q(\xi)|^2}{4\kappa^2+\xi^2}\, d\xi,
\]
which we used to show that precompact sets of initial data produce tight ensembles of trajectories.  (This would be a trivial consequence of well-posedness; for us, it was a crucial step in \emph{proving} well-posedness.)

\section{Toda}\label{S:Toda} 

The Toda Lattice \cite{TodaBook} is a completely integrable chain of anharmonic oscillators.  The Hamiltonian takes the form
\begin{equation}\label{Toda H}
H := \sum_{n\in \Z}\Bigl[\tfrac12 p_n^2 + V(q_{n+1} - q_n)\Bigr]\qtq{where}V(x) := e^{-x} + x - 1.
\end{equation}
Here $q_n$ represent particle positions (relative to global equilibrium) and $p_n$ their conjugate momenta.  Correspondingly, the Poisson bracket is given by
\begin{equation}\label{Toda PB}
\{F,G\} := \sum_{n\in \Z}\Bigl[ \tfrac{\partial F}{\partial q_n}\tfrac{\partial G}{\partial p_n} - \tfrac{\partial F}{\partial p_n}\tfrac{\partial G}{\partial q_n}\Bigr]
\end{equation}
and the resulting dynamics takes the form
\begin{equation}\label{TL}\tag{TL}
\frac d{dt} q_n = p_n
	\qtq{and}
\frac d{dt} p_n = V'(q_{n+1}-q_{n}) - V'(q_n-q_{n-1}).
\end{equation}

A Lax pair representation of these dynamics was discovered by Flaschka~\cite{Flaschka}, which we will soon describe. The first step is to change variables to
\begin{equation}\label{Flaschka}
a_n := \tfrac12 e^{\frac12(q_n - q_{n+1})} \qtq{and} b_n := - \tfrac12 p_n.
\end{equation}
In these variables, the Hamiltonian becomes
\begin{equation}\label{TF H}
H = \sum_{n\in \Z}\Bigl[2b_n^2 + V( - 2 \log 2 a_n ) \Bigr] \qtq{where again}  V(x) = e^{-x} + x - 1, 
\end{equation}
the Poisson bracket takes the form
\begin{equation}\label{TF PB}
\{F,G\} = \tfrac14\sum\limits_{n\in\Z}a_n\Bigl[\tfrac{\partial F}{\partial a_n}\Bigl(\tfrac{\partial G}{\partial b_{n+1}} - \tfrac{\partial G}{\partial b_n}\Bigr) - \Bigl(\tfrac{\partial F}{\partial b_{n+1}} - \tfrac {\partial F}{\partial b_n}\Bigr)\tfrac{\partial G}{\partial a_n}\Bigr],
\end{equation}
and the equations of motion become
\begin{align}\label{Toda PDE}
\frac d{dt}a_n = a_n\bigl(b_{n+1} - b_n\bigr)
	\qtq{and}
\frac d{dt} b_n = 2\bigl(a_n^2 - a_{n-1}^2\bigr).
\end{align}

We shall confine our attention in this paper to finite-energy solutions to the Toda system.  In view of the strict convexity of $V(x)$ and its quadratic vanishing at $x=0$, we see that finite energy can be expressed by the following equivalent conditions:
\begin{equation}\label{Toda Espace}
\sum_{n\in\Z} p_n^2 + (q_{n+1}-q_n)^2 <\infty \qtq{or} \sum_{n\in\Z} b_n^2 + (\log 2a_n)^2 < \infty.
\end{equation}
With a view to our future needs, we define `balls' in the energy space via
\begin{equation}\label{Toda Bdelta}
B^\kappa_\delta := \bigl\{ (a_m,b_m) : H < \delta^2 \kappa \bigr\}.
\end{equation}

It is a trivial matter to see that the Toda Lattice is globally well-posed in the energy space, which makes it an ideal setup for our discussion.  Nevertheless, some calculations below will require stronger hypotheses, namely,
\begin{align}\label{Toda l1}
b_n\in\ell^1 \qtq{and} \log(2a_n)\in \ell^1.
\end{align}
In particular, these assumptions allow consideration of the conserved quantities
\begin{align}\label{Toda Casimirs}
M := \sum_{n\in \Z} (q_{n+1} - q_n) = - \sum_{n\in \Z} 2\log(2a_n) 
	\qtq{and}
P := \sum_{n\in \Z}p_n = - \sum_{n\in \Z} 2b_n.
\end{align}
These represent the net expansion of the lattice and total momentum, respectively.  Both are Casimirs:
$
\{ a_n, M \} = \{ b_n, M\} = 0 = \{ a_n, P \} = \{ b_n, P\}
$
for every $n\in\Z$.

The Lax representation discovered by Flaschka~\cite{Flaschka} takes the form
\begin{align}\label{Toda LaxE}
\tfrac d{dt}L_\pm = [\pm \mathcal P,L_\pm]
\end{align}
in terms of the tri-diagonal operators
\begin{align}
(L_\pm f)_n &:= \cosh(\kappa) f_n - \Bigl(a_nf_{n+1} + a_{n-1}f_{n-1} \pm b_nf_n\Bigr), \label{Toda L}\\
(\mathcal P f)_n &:= a_nf_{n+1} - a_{n-1}f_{n-1}. \label{Toda P}
\end{align}
As we consider finite energy solutions, these are bounded operators on $\ell^2(\Z)$.  Here, $\cosh(\kappa)$ serves as the spectral parameter; the merit of this representation will become apparent later when we discuss the Green's function (cf. \eqref{Toda G0} below).

The relationship between $L_+$ and $L_-$ is ultimately that of changing the sign of the spectral parameter.   Concretely, defining the unitary involution
$$
(U f)_n=(-1)^n f_n, \qtq{we find}  L_+ - 2 \cosh(\kappa)  = - U  L_- U .
$$
This overcomes the restriction that the $\cosh(\kappa)$ parameterization leads to the spectral parameter always being positive, while also ensuring that the conserved densities described below are non-negative.

The natural static solution of the Toda Lattice is the minimum energy state: $b_n\equiv 0$ and $a_n\equiv\tfrac12$.  In this state, $L_+$ and $L_-$ agree; we adopt the notation $L_0$:
\begin{align*}
(L_0f)_n =  \cosh(\kappa) f_n - \tfrac12\bigl(f_{n+1} + f_{n-1}\bigr).
\end{align*}
This is invertible as soon as $\kappa>0$, with corresponding Green's function
\begin{align}\label{Toda G0}
G_0(n,m;\kappa) = \tfrac1{\sinh\kappa}e^{-\kappa|n - m|}.
\end{align}

For any finite energy state, the operators $L_\pm$ are invertible provided one takes $\kappa$ sufficiently large (depending on the energy).  We write
$$
G_\pm (n,m;\kappa) = \langle\delta_n , L_\pm ^{-1} \delta_m\rangle 
$$
for the corresponding Green's function.  Then,
\begin{equation}\label{Toda naive G}
\tr \bigl\{ L_\pm^{-1}  -  L_0^{-1} \bigr\} = \sum_{n} \bigl[ G_\pm(n,n;\kappa) - \tfrac{1}{\sinh(\kappa)} \bigr].
\end{equation}

It is not difficult to verify that the term in square brackets in \eqref{Toda naive G} is a conserved density for the Toda evolution.   In fact, \eqref{Toda LaxE} yields
\begin{equation}\label{Toda dtG}
\begin{aligned}
\pm \tfrac{d}{dt} G_\pm (n,m) &= a_nG_\pm(n+1,m) - a_{n-1}G_\pm(n-1,m) \\
	& \quad + a_mG_\pm(n,m+1) - a_{m-1}G_\pm(n,m-1).
\end{aligned}
\end{equation}

While \eqref{Toda naive G} is readily verified under the conditions \eqref{Toda l1}, neither side of this equation makes sense for all finite-energy solutions.  Renormalization is required!

For this model, it would be a folly to renormalize using the next term of the Neumann series, namely,
\eq{Toda no log}{
\langle\delta_n, L_0^{-1} (L_\pm - L_0) L_0^{-1} \delta_n\rangle =  - \sum_m \Bigl[\bigl[2a_m-1\bigr] \tfrac{e^{-2\kappa|m+\frac12-n|}}{\sinh^2(\kappa)}
	 \pm b_m \tfrac{ e^{-2\kappa|m-n|} }{\sinh^2(\kappa)}\Bigr],
}
because (unlike for KdV) this is not a conserved density.  We propose the remedy of using
\begin{align}\label{Toda gamma}
\gamma_n^\pm := G_\pm(n,n) - \tfrac{1}{\sinh(\kappa)} - \sum_m \Bigl[\log(2a_m)  \tfrac{e^{-2\kappa|m+\frac12-n|}}{\sinh^2(\kappa)} \pm b_m \tfrac{e^{-2\kappa|m-n|}}{\sinh^2(\kappa)} \Bigr],
\end{align}
inspired by \eqref{Toda Casimirs}.  As we will see shortly, this is a conserved density and the corresponding currents are
\begin{gather}\label{Toda gamma j}
\begin{aligned}
{}^\gamma\!j_n^\pm := 2a_{n-1} &G_\pm(n,n-1)  - \tfrac{e^{-\kappa}}{\sinh(\kappa)}  \\
& - \sum_m\Bigl[ [4a_{m-1}^2-1] \tfrac{e^{-2\kappa|m-n|}}{2\sinh^2(\kappa)} \pm b_m \tfrac{e^{-2\kappa|m+\frac12-n|}}{\sinh^2(\kappa)}\Bigr].
\end{aligned}
\end{gather}

As in all sections of this paper, the principal question to be addressed is not that of finding the right microscopic representation of \eqref{conserve tr}, but rather of finding such a representation of \eqref{conserve det}.  Our answer rests on the following quantities:
\begin{gather}\label{Toda rho}
\begin{aligned}
\rho^\pm_n&:= \kappa - \tfrac12\log\Bigl[1 + \tfrac1{a_nG_\pm(n,n+1)}\Bigr]  \\
&\qquad\qquad\qquad - \sum\limits_{m}\Bigl[\log(2a_m)e^{-2\kappa|n-m|} \pm b_me^{-2\kappa|n + \frac12 - m|}\Bigr]
\end{aligned}
\end{gather}
and
\begin{gather}
\label{Toda j}
 j_n^\pm := \sinh\kappa - \tfrac 1{G_\pm(n,n)} - \sum\limits_{m}\Bigl[[4a_{m-1}^2-1]\tfrac{e^{-2\kappa|n - \frac12 - m|}}2 \pm b_me^{-2\kappa|n-m|}\Bigr].
\end{gather}

\begin{thrm}\label{T:Toda}
There exists $\delta>0$ so that for every $\kappa\geq 1$ and every collection of parameters $(a_m,b_m)\in B^\kappa_\delta$, the sequences $\rho_n^\pm$ and $\gamma^\pm_n$  are non-negative, $\ell^1$, and represent conserved densities for the Toda Lattice.  Concretely,
\begin{align}\label{Toda currents}
\pm \tfrac d{dt} \rho_n^\pm = j^\pm_{n+1} - j^\pm_n \qtq{and}  \pm \tfrac d{dt}\gamma^\pm_n = {}^\gamma\!j^\pm_{n+1} - {}^\gamma\!j^\pm_n,
\end{align}
where \(j_n^\pm,{}^\gamma\!j_n^\pm\in \ell^1\).

Further, for each fixed $n$, $\rho_n^\pm$ and $\gamma^\pm_n$ are convex functions of $(\log2a_m,b_m)$.

Lastly, these yield macroscopic conservation laws of the sought-after form:  If the parameters also satisfy \eqref{Toda l1}, then 
\begin{gather}\label{Toda conserve det}
\sum_n \rho_n^\pm = - \log \det\bigl[ L_\pm / L_0 \bigr] \pm \tfrac{1}{2\sinh(\kappa)} P + \tfrac{ e^{-\kappa} }{2\sinh(\kappa)}  M, \\
\label{Toda conserve tr}
\sum_n \gamma_n^\pm = \tr \bigl\{ L_\pm^{-1}  -  L_0^{-1} \bigr\} \pm \tfrac{\cosh(\kappa)}{2\sinh^3(\kappa)} P + \tfrac{1}{2\sinh^3(\kappa)}  M ,
\end{gather}
where $P$ and $M$ are the Casimirs written in \eqref{Toda Casimirs}.
\end{thrm}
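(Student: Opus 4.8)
The plan is to prove the delicate identities first under the summability hypotheses \eqref{Toda l1} --- where every manipulation below is manifestly legitimate and the macroscopic statements \eqref{Toda conserve det}--\eqref{Toda conserve tr} also make sense --- and then to extend the microscopic laws, non-negativity, convexity, and $\ell^1$ membership to all of $B^\kappa_\delta$ by continuity. The analytic preliminaries come first. For $\kappa\ge1$ and $\delta$ small, the embedding $\ell^2\hookrightarrow\ell^\infty$ turns the energy constraint $(a_m,b_m)\in B^\kappa_\delta$ into smallness of $\sup_n|2a_n-1|$ and $\sup_n|b_n|$; consequently $L_\pm=\cosh(\kappa)-A_\pm$ is strictly diagonally dominant with nonpositive off-diagonal entries $-a_n$, hence an invertible M-matrix whose inverse $G_\pm\ge0$ is entrywise positive. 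A Combes--Thomas (or Neumann-series) estimate against the explicit kernel \eqref{Toda G0} then yields $0<G_\pm(n,m)\lesssim \tfrac1{\sinh\kappa}e^{-\kappa|n-m|}$, which guarantees that all the sums defining $\gamma_n^\pm$, $\rho_n^\pm$, and the currents converge and lie in $\ell^1$.

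For the microscopic laws I would compute directly. The density $\gamma^\pm$ is the easy case: restricting \eqref{Toda dtG} to the diagonal $m=n$ and using the symmetry $G_\pm(n,m)=G_\pm(m,n)$ collapses the right-hand side to $\pm\tfrac{d}{dt}G_\pm(n,n)=F_{n+1}-F_n$ with $F_n:=2a_{n-1}G_\pm(n,n-1)$, already a perfect difference. Differentiating the renormalizing sum in \eqref{Toda gamma} via the equations of motion \eqref{Toda PDE} and summing by parts produces a second perfect difference; matching the two against \eqref{Toda gamma j} is then bookkeeping, the constant $e^{-\kappa}/\sinh\kappa$ being exactly the vacuum value of $F_n$. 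For $\rho^\pm$ the key is the algebraic identity
\[
1+\tfrac1{a_nG_\pm(n,n+1)}=\frac{\psi^-_{n+1}\psi^+_n}{\psi^-_n\psi^+_{n+1}}=\frac{1}{m^-_n\,a_n^2\,m^+_{n+1}},
\]
where $\psi^\pm$ are the Weyl solutions of $L_\pm\psi=0$ decaying at $\pm\infty$ and $m^\pm_n$ are the half-line Green's functions at the cut. Because the middle expression is a ratio, its logarithm telescopes; and since the Lax evolution moves the Weyl solutions by $\dot\psi^\pm=\pm\mathcal P\psi^\pm+\lambda^\pm\psi^\pm$, the scalar normalizations $\lambda^\pm$ drop out of the telescoped differences, so that $\tfrac{d}{dt}\log[1+\tfrac1{a_nG_\pm(n,n+1)}]$ is itself a perfect difference $\pm(S_{n+1}-S_n)$ in the ratios $a_n\psi^\pm_{n+1}/\psi^\pm_n$. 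Re-expressing these ratios through $G_\pm$ and combining with the differentiated renormalization (again via \eqref{Toda PDE} and summation by parts) should reproduce \eqref{Toda j}.

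Non-negativity and convexity I would read off from the continued-fraction representation $G_\pm(n,n)=(\cosh\kappa\mp b_n-a_n^2m^+_{n+1}-a_{n-1}^2m^-_{n-1})^{-1}$ together with
\[
m^+_n=\bigl(\cosh\kappa\mp b_n-a_n^2m^+_{n+1}\bigr)^{-1},\quad m^-_n=\bigl(\cosh\kappa\mp b_n-a_{n-1}^2m^-_{n-1}\bigr)^{-1}.
\]
Working in the variables $(\log2a_m,b_m)$ (so that $a_m^2$ is log-affine and $b_m$ enters affinely), I claim $m^\pm_n$ are \emph{log-convex}, proved by propagating the recursion inward from $\pm\infty$: if $m^+_{n+1}$ is log-convex then $a_n^2m^+_{n+1}$ is log-convex, hence convex, so $\cosh\kappa\mp b_n-a_n^2m^+_{n+1}$ is concave, and its reciprocal $m^+_n$ is both convex and (since $\log$ of a positive concave function is concave) log-convex. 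Then $G_\pm(n,n)$ is the reciprocal of a concave positive function, hence convex, while $-\tfrac12\log[1+\tfrac1{a_nG_\pm(n,n+1)}]=\tfrac12\log m^-_n+\log a_n+\tfrac12\log m^+_{n+1}$ is convex plus affine plus convex; as the renormalizations in \eqref{Toda gamma} and \eqref{Toda rho} are linear in $(\log2a_m,b_m)$, both $\gamma_n^\pm$ and $\rho_n^\pm$ are convex. Finally, a first-order Neumann expansion at the vacuum (using $\log2a_m\approx 2a_m-1$) identifies these linear renormalizations as exactly the first-order Taylor polynomials at $(\log2a,b)=(0,0)$ of $G_\pm(n,n)-\tfrac1{\sinh\kappa}$ and of $\kappa-\tfrac12\log[1+\tfrac1{a_nG_\pm(n,n+1)}]$; since the vacuum lies in the interior of the convex set $B^\kappa_\delta$ (convex because $V$ is), each density is a convex function minus its own tangent plane there, whence $\gamma_n^\pm,\rho_n^\pm\ge0$.

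The macroscopic identities follow by summation. Using the closed forms $\sum_n e^{-2\kappa|n-m|}=\coth\kappa$ and $\sum_n e^{-2\kappa|n+\frac12-m|}=\tfrac1{\sinh\kappa}$, the summed renormalizations become multiples of the Casimirs $M,P$ of \eqref{Toda Casimirs}, and a short trigonometric simplification produces exactly the coefficients in \eqref{Toda conserve det}--\eqref{Toda conserve tr}; for $\gamma^\pm$ the remainder $\sum_n[G_\pm(n,n)-\tfrac1{\sinh\kappa}]$ is $\tr\{L_\pm^{-1}-L_0^{-1}\}$ by \eqref{Toda naive G}, while for $\rho^\pm$ the telescoping ratio collapses $\sum_n(\kappa-\tfrac12\log[1+\tfrac1{a_nG_\pm(n,n+1)}])$ to boundary terms that the counterterms $\kappa-\log2a_n$ precisely absorb, leaving the log of the Jost--Wronskian. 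The main obstacle I anticipate is the microscopic law for $\rho^\pm$: unlike $\gamma^\pm$, which evolves through the \emph{linear} identity \eqref{Toda dtG}, the density $\rho^\pm$ is a nonlinear function of the off-diagonal Green's function, so producing \eqref{Toda j} hinges on first finding the right reformulation (the Weyl/continued-fraction identity above) and then verifying that, after the normalizations cancel and the renormalization is summed by parts, the remainder telescopes with no leftover. Closely related, and the main conceptual input, is the identification of those boundary terms with $-\log\det[L_\pm/L_0]$, which requires the scattering-theoretic fact that the perturbation determinant equals the transmission coefficient; this, together with justifying the passage from \eqref{Toda l1} to all of $B^\kappa_\delta$ by continuity, is where the real work lies.
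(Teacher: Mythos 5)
Much of your outline is sound, and two of your ingredients are genuinely different from the paper's in a worthwhile way. Your Weyl-solution identity $1+\tfrac1{a_nG_\pm(n,n+1)}=\psi^+_n\psi^-_{n+1}/(\psi^-_n\psi^+_{n+1})$ is exactly the paper's reformulation \eqref{alt rho'} (and its difference relations \eqref{D1}, \eqref{U1}) in different clothing, so your derivation of the currents is essentially the paper's, modulo the justification that $\dot\psi^\pm\mp\mathcal P\psi^\pm$ is proportional to $\psi^\pm$. More interestingly, your convexity argument --- propagating log-convexity of the half-line $m$-functions through the continued-fraction recursion, then writing $G_\pm(n,n)$ as the reciprocal of a positive concave function and $-\tfrac12\log[1+\tfrac1{a_nG_\pm(n,n+1)}]=\tfrac12\log m^-_n+\log a_n+\tfrac12\log m^+_{n+1}$ --- is a valid and considerably slicker substitute for the paper's proof, which proceeds by a second-derivative computation, a case analysis of a kernel $K_n$, and Cauchy--Schwarz. (You do need to anchor the induction ``from $\pm\infty$,'' e.g.\ by finitely supported approximation, since the recursion has no finite base case.) The deduction of non-negativity from convexity plus vanishing of the value and tangent plane at the vacuum is the same as the paper's.

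There are, however, two genuine gaps. First, your claim that the bound $0<G_\pm(n,m)\lesssim\tfrac1{\sinh\kappa}e^{-\kappa|n-m|}$ ``guarantees'' that $\gamma_n^\pm,\rho_n^\pm\in\ell^1$ is false: off-diagonal decay controls the sums over $m$ inside each density, but says nothing about summability in $n$. For data merely in $B^\kappa_\delta$, the quantity $G_\pm(n,n)-\tfrac1{\sinh\kappa}$ is only $\ell^2$ in $n$ (its leading Neumann term \eqref{Toda no log} is linear in $(2a_m-1,b_m)\in\ell^2$); membership in $\ell^1$ rests entirely on the cancellation between that term and the renormalization in \eqref{Toda gamma}, and ``extension by continuity'' from \eqref{Toda l1} cannot produce it, since pointwise limits do not preserve $\ell^1$ bounds. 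The repair is available from your own positivity: approximate by \eqref{Toda l1} data, use the summed trace/determinant expansions (which converge uniformly on $B^\kappa_\delta$ by the Hilbert--Schmidt bound \eqref{Toda HS small}) to bound $\sum_n\rho_n^\pm$ and $\sum_n\gamma_n^\pm$ uniformly, and conclude by Fatou --- which is precisely what the paper does. Second, you have not proved \eqref{Toda conserve det}: you reduce it to the identification of the perturbation determinant with the Jost--Wronskian (reciprocal transmission coefficient) and explicitly defer that as ``where the real work lies.'' That fact is classical but nontrivial, and the paper shows how to avoid it entirely: differentiating \eqref{alt rho'} in the spectral parameter and telescoping gives $\tfrac1{\sinh\kappa}\partial_\kappa\sum_n\rho_n^\pm=-\sum_n\gamma_n^\pm$, while the trace-class series for $-\log\det[L_\pm/L_0]$ gives $\tfrac1{\sinh\kappa}\partial_\kappa\bigl(-\log\det[L_\pm/L_0]\bigr)=-\tr\{L_\pm^{-1}-L_0^{-1}\}$; subtracting, invoking the already-established \eqref{Toda conserve tr}, and integrating in $\kappa$ out to infinity yields \eqref{Toda conserve det} self-containedly. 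Without either this argument or a proof of the determinant--transmission identity, your derivation of the first macroscopic law is incomplete.
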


The proof of this theorem will be given at the end of this section, building on a series of preliminary lemmas.  Our first lemma addresses the existence and properties of the Green's functions $G_\pm$:

\begin{lem}\label{L:Toda G}
There exists $\delta>0$ so that the following hold for all $\kappa\geq 1$:  The Lax operators $L_\pm$ are invertible and their Green's functions are given by the series
\begin{align}\label{Toda G series}
G_\pm(n,m;\kappa) - G_0(n,m;\kappa) = \sum_{\ell\geq 1} (-1)^\ell \bigl\langle\delta_n,\ L_0^{-1}\bigl[(L_\pm-L_0)L_0^{-1}\bigr]^\ell \delta_m\bigr\rangle,
\end{align}
which converges uniformly in Hilbert--Schmidt class throughout $B^\kappa_\delta$.  The Green's functions are positive, symmetric under $n\leftrightarrow m$, and satisfy the identities
\begin{gather}
G_\pm(n,n+1)\bigl[1 + a_nG_\pm(n,n+1)\bigr] = a_nG_\pm(n,n)G_\pm(n+1,n+1),\label{QuadraticID}\\
\tfrac{G_\pm(n+1,k)}{G_\pm(n+1,n)} = \tfrac{G_\pm(n,k)}{G_\pm(n,n)}\Bigl[1 + \tfrac1{a_nG_\pm(n,n+1)}\bbo_{k>n}\Bigr],\label{D1}\\
\tfrac{G_\pm(n,k)}{G_\pm(n,n+1)} = \tfrac{G_\pm(n+1,k)}{G_\pm(n+1,n+1)}\Bigl[1 + \tfrac1{a_nG_\pm(n,n+1)}\bbo_{k\leq n}\Bigr],\label{U1}\\
G_\pm(k,\ell) = \tfrac{G_\pm(k,n+1)G_\pm(n,\ell)}{G_\pm(n+1,n)}\qtq{if}k\leq n<\ell.\label{Middle}
\end{gather}
\end{lem}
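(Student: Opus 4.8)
The plan is to establish the statement in four stages, in the order listed: invertibility together with the Neumann expansion \eqref{Toda G series}, its Hilbert--Schmidt convergence, symmetry and positivity, and finally the algebraic identities. Throughout I would exploit that $L_0$ is a Fourier multiplier with symbol $\cosh\kappa-\cos\theta$, hence positive with $\|L_0^{-1}\|_\op=(\cosh\kappa-1)^{-1}$. I would symmetrize, writing $L_\pm=L_0^{1/2}(I+A)L_0^{1/2}$ with $A:=L_0^{-1/2}(L_\pm-L_0)L_0^{-1/2}$, and estimate $A$ through its quadratic form: since $\bigl((L_\pm-L_0)f\bigr)_n=-(a_n-\tfrac12)f_{n+1}-(a_{n-1}-\tfrac12)f_{n-1}\mp b_nf_n$ and $\langle f,L_0f\rangle\geq(\cosh\kappa-1)\|f\|^2$, one gets $\|A\|_\op\leq\bigl(2\sup_n|a_n-\tfrac12|+\sup_n|b_n|\bigr)/(\cosh\kappa-1)$. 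The crux of the whole lemma is to show this is at most $\tfrac12$, uniformly for $\kappa\geq1$ and $(a_m,b_m)\in B^\kappa_\delta$ once $\delta$ is small; this is where the $\kappa$-weighted energy threshold in \eqref{Toda Bdelta} is essential. The bound $H<\delta^2\kappa$ forces $|b_n|\lesssim\delta\sqrt\kappa$ and, writing $x_n:=-2\log(2a_n)$, $V(x_n)<\delta^2\kappa$. Splitting according to whether $|x_n|$ and $|b_n|$ are small or large — the latter possible only when $\kappa$ is large — I would play the smallness of the energy against the largeness of $\cosh\kappa-1$ (using $\cosh\kappa-1\gtrsim\kappa^2$ for $\kappa\geq1$ and $\gtrsim e^\kappa$ for large $\kappa$); both regimes yield a bound $\lesssim\delta$. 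With $\|A\|_\op\leq\tfrac12$ the operator $I+A$, hence $L_\pm$, is invertible, and the Neumann series for $(I+A)^{-1}$ gives \eqref{Toda G series} at the operator level; taking $(n,m)$ matrix elements yields the stated form.

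For the Hilbert--Schmidt convergence the key structural observation is that $L_\pm-L_0$ is banded with $\ell^2$ coefficients, hence Hilbert--Schmidt, with $\|L_\pm-L_0\|_{\mr{HS}}^2=\sum_n\bigl(b_n^2+2(a_n-\tfrac12)^2\bigr)\lesssim H$, using $(a_n-\tfrac12)^2\lesssim V(x_n)$ in every regime. (By contrast $L_0^{-1}$ is not Hilbert--Schmidt, being translation invariant; the finiteness comes entirely from the potential.) I would then bound the $\ell$th term of \eqref{Toda G series} by putting one factor of $L_\pm-L_0$ in Hilbert--Schmidt norm and the remaining $\ell-1$ copies of $A$ in operator norm, giving $\|L_0^{-1}\bigl[(L_\pm-L_0)L_0^{-1}\bigr]^\ell\|_{\mr{HS}}\leq\|L_0^{-1}\|_\op^2\,\|L_\pm-L_0\|_{\mr{HS}}\,\|A\|_\op^{\ell-1}$. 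Summing the resulting geometric series (convergent since $\|A\|_\op\leq\tfrac12$) proves convergence in Hilbert--Schmidt class, uniformly over $B^\kappa_\delta$.

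Symmetry $G_\pm(n,m)=G_\pm(m,n)$ is immediate from the self-adjointness of the real, symmetric, tridiagonal $L_\pm$. For positivity I would use that $L_\pm$ is an irreducible Stieltjes operator: its off-diagonal entries $-a_n$ are negative, its diagonal $\cosh\kappa\mp b_n$ is positive (as $|b_n|\ll\cosh\kappa$), and $L_\pm\geq\tfrac12L_0>0$ from the bound above. Writing $L_\pm=D-N$ with $D$ the positive diagonal and $N\geq0$ the nearest-neighbour part, positivity of $L_\pm$ gives $\|D^{-1/2}ND^{-1/2}\|_\op<1$, so $L_\pm^{-1}=\sum_{j\geq0}(D^{-1}N)^jD^{-1}$ converges with all terms entrywise non-negative; irreducibility ($a_n>0$) then makes every entry strictly positive.

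Finally, for the identities I would pass to the Weyl solutions. For fixed $m$, the column $G_\pm(\cdot,m)$ lies in $\ell^2$ and solves the homogeneous recurrence $(L_\pm u)_n=0$ off the diagonal; since the constant Wronskian of two sequences decaying at the same end must vanish (as $a_n$ is bounded), the decaying solution at each end is unique up to scalars, giving sequences $\psi^\pm$ at $\pm\infty$. Combined with symmetry this forces the separable representation $G_\pm(n,m)=-\tfrac1W\,\psi^-_{\min(n,m)}\psi^+_{\max(n,m)}$, where $W=a_n(\psi^-_n\psi^+_{n+1}-\psi^+_n\psi^-_{n+1})$ is the $n$-independent Wronskian and the constant $-1/W$ is fixed by the jump in $(L_\pm G_\pm(\cdot,m))_n=\delta_{nm}$. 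Positivity guarantees $\psi^\pm_n\neq0$, so all ratios are defined. Each of \eqref{QuadraticID}, \eqref{D1}, \eqref{U1}, \eqref{Middle} then reduces, after substitution, to the single Wronskian relation $a_n(\psi^-_n\psi^+_{n+1}-\psi^+_n\psi^-_{n+1})=W$; for example \eqref{Middle} is the telescoping identity $\psi^-_k\psi^+_\ell=(\psi^-_k\psi^+_{n+1})(\psi^-_n\psi^+_\ell)/(\psi^-_n\psi^+_{n+1})$, valid precisely when $k\leq n<\ell$. I expect the main obstacle to be the uniform-in-$\kappa$ bound $\|A\|_\op\leq\tfrac12$ of the first stage; the remaining stages are a convergent geometric series, a Stieltjes-matrix expansion, and the classical factorization of a Jacobi Green's function.
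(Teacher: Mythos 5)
Your proposal is correct in substance but follows a genuinely different route from the paper in each of its three main components. For invertibility and convergence, the paper does not symmetrize: it proves the single Hilbert--Schmidt bound $\|(L_\pm-L_0)L_0^{-1}\|_{\I_2}\leq \sqrt{H}/(\cosh\kappa-1)\lesssim\delta\sqrt{\kappa}\,e^{-\kappa}\lesssim\delta$, which settles invertibility and uniform $\I_2$-convergence of \eqref{Toda G series} in one stroke (the operator norm being dominated by the $\I_2$ norm); the clean inequality $(2a_n-1)^2\leq V(-2\log 2a_n)$, equivalent to $\log y\leq y-1$, replaces your regime-splitting. Your two-norm bookkeeping with $A=L_0^{-1/2}(L_\pm-L_0)L_0^{-1/2}$ works but is more laborious. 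For positivity, the paper runs a discrete maximum principle on $m\mapsto G_\pm(m,n)$ (using $G_\pm\to0$ at infinity, read off from the series), whereas you expand $L_\pm^{-1}=\sum_j(D^{-1}N)^jD^{-1}$; this is viable, but note that positive definiteness of $D-N$ alone does \emph{not} give $\|D^{-1/2}ND^{-1/2}\|_\op<1$ --- it controls only the top of the spectrum, not the bottom. You need either the Perron--Frobenius fact that an entrywise non-negative self-adjoint operator has norm equal to the supremum of its spectrum, or the observation that $D+N=UL_\pm U>0$ where $(Uf)_n=(-1)^nf_n$; either patch is easy, with the uniform gap supplied by $L_\pm\geq\tfrac12 L_0$. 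For the identities, you reconstruct Weyl solutions and the factorization $G_\pm(n,m)=-W^{-1}\psi^-_{\min(n,m)}\psi^+_{\max(n,m)}$; this is precisely the alternative the paper acknowledges and declines (``these identities can be obtained by writing the Green's function in terms of the Jost solutions''), preferring instead to telescope the discrete Wronskian $I_m=a_m[G_\pm(n,m+1)G_\pm(m,k)-G_\pm(n,m)G_\pm(m+1,k)]$ built directly from columns of $G_\pm$. Both routes rest on the same constant-Wronskian computation, but the paper's avoids the existence/uniqueness scaffolding for $\psi^\pm$ and the fact $W\neq0$ (standard, but left implicit in your sketch: $W=0$ would produce an $\ell^2$ kernel element, contradicting invertibility). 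Your factorization buys structural transparency --- all four identities follow at once by substitution --- while the paper's argument is shorter and phrased entirely in terms of $G_\pm$ itself.
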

\begin{proof}
Fourier analysis easily yields the operator norm
$$
\| L_0^{-1} \|_\op = \tfrac{1}{\cosh(\kappa) - 1}.
$$
Elementary manipulations show that, for parameters in $B^\kappa_\delta$ with $\kappa\geq1$ and $\delta>0$ small enough, we have the Hilbert--Schmidt estimate
$$
 \| L_\pm - L_0 \|_{\I_2}^2 \leq \|b_m\|_{\ell^2}^2 + \|2a_m-1\|_{\ell^2}^2 \leq H,
$$
and so
\begin{align}\label{Toda HS small}
\| [L_\pm - L_0]L_0^{-1} \|_{\I_2} \lesssim \delta \sqrt{\kappa} \,e^{ -\kappa} \lesssim \delta.
\end{align}
This settles the convergence of the series \eqref{Toda G series}. The positivity of the Green's function follows from a maximum-principle argument:  Shrinking $\delta$, if necessary, we may ensure that
$$
\cosh(\kappa) > 1 + \| b_m \|_{\ell^\infty} + \| 2a_m-1 \|_{\ell^\infty}
$$
throughout $B^\kappa_\delta$.  The defining property of the Green's function,
$$
[\cosh(\kappa) \mp b_m] G_\pm(m,n) = a_m G_\pm(m+1,n) + a_{m-1} G_\pm(m-1,n) + \delta_{mn},
$$
then shows that for each fixed $n$, if the sequence $m\mapsto G_\pm(m,n)$ achieves a minimum, it must be positive.  However, the series representation shows $G_\pm(m,n)\to 0$ as $m\to\pm\infty$.  We conclude that the infimum of $G_\pm(n,m)$ over all choices of $n,m$ is zero and that this value is never achieved.

The $n\leftrightarrow m$ symmetry of $G_\pm$ is inherited from the self-adjointness of $L_\pm$.

We turn now to the identities \eqref{QuadraticID}, \eqref{D1}, and \eqref{U1}. We remark that these identities can be obtained by writing the Green's function in terms of the Jost solutions. However, we choose to prove them using elementary identities involving the Green's function.  Fixing \(n,k\in\Z\), let us define
\[
I_m := a_m\bigl[G_\pm(n,m+1)G_\pm(m,k) - G_\pm(n,m)G_\pm(m+1,k)\bigr].
\]
Note $I_m\to 0$ as $m\to\pm\infty$.  As the Green's function inverts $L_\pm$, so we have 
\[
I_m - I_{m-1} = (\delta_{mk} - \delta_{nm})G_\pm(n,k).
\]
This may then be summed to obtain
\[
I_m = \begin{cases}
 - G_\pm(n,k)&\qtq{if}n\leq m<k,\\
G_\pm(n,k)&\qtq{if}k\leq m<n,\\
0&\qtq{otherwise.}
\end{cases}
\]
The identity \eqref{QuadraticID} then follows by taking \(k = n+1\) and \(m=n\). Similarly, the identity \eqref{D1} follows from taking \(m = n\), whereas the identity \eqref{U1} from taking \(m=n-1\) and then replacing \(n\) by \(n+1\). Finally, in the case that \(k\leq n<\ell\) we may write our identity for \(I_m\) in the form
\[
\tfrac{G(k,n+1)}{G(n,n+1)} = \tfrac{G(k,n+2)}{G(n,n+2)} = \dots = \tfrac{G(k,\ell)}{G(n,\ell)},
\]
which yields \eqref{Middle}.
\end{proof}

The identity \eqref{QuadraticID} provides an alternate expression for $\rho_n$ via the identities
\begin{align}\label{alt rho}
\tfrac12\log\Bigl[1 + \tfrac1{a_nG_\pm(n,n+1)}\Bigr] &= \arcsinh\Bigl[\tfrac1{\sqrt{4a_n^2G_\pm(n,n)G_\pm(n+1,n+1)}}\Bigr] \\
&=\tfrac12\log\Bigl[\tfrac{G_\pm(n,n)G_\pm(n+1,n+1)}{G_\pm(n,n+1)^2}\Bigr]. \label{alt rho'}
\end{align}
Each resulting form of $\rho_n$ has its own merits. The original expression is the most compact, whereas RHS\eqref{alt rho} provides the strongest link to the reciprocal of the diagonal Green's function in the continuum limit. However, it is RHS\eqref{alt rho'} that will dominate our proof of Theorem~\ref{T:Toda}.

\begin{proof}[Proof of Theorem~\ref{T:Toda}]
Lemma~\ref{L:Toda G} already guarantees that $\rho^\pm_n$ and $\gamma^\pm_n$ (as well as their purported currents) are well-defined, provided $\delta>0$ is chosen sufficiently small.

Using \eqref{alt rho'} we may write \(\rho_n\) in the form
\[
\rho_n^\pm = \kappa - \tfrac12\log\Biggl[\tfrac{G_\pm(n,n)G_\pm(n+1,n+1)}{G_\pm(n,n+1)^2}\Biggr] - \sum\limits_{m}\Bigl[\log(2a_m)e^{-2\kappa|n-m|} \pm b_me^{-2\kappa|n + \frac12 - m|}\Bigr].
\]
The identities \eqref{Toda currents} then follow from \eqref{Toda PDE}, \eqref{Toda dtG}, \eqref{D1}, and \eqref{U1}.

We now turn our attention to the identities \eqref{Toda conserve det} and \eqref{Toda conserve tr}.  Here we assume that the parameters $(a_m, b_m)$ satisfy \eqref{Toda l1}, which ensures that \(L^{-1} - L_0^{-1}\) is trace class. The identity \eqref{Toda conserve tr} follows directly from the definition \eqref{Toda gamma}. To justify \eqref{Toda conserve det}, we apply the resolvent identity combined with \eqref{D1} and \eqref{U1} to write
\begin{align*}
&\tfrac1{\sinh\kappa}\p_\kappa\rho_n^\pm\\
&\qquad= \tfrac1{\sinh\kappa} - \sum\limits_m\Bigl[ \tfrac{G_\pm(n+1,m)G_\pm(m,n)}{G_\pm(n+1,n)} - \tfrac{G_\pm(n,m)G_\pm(m,n)}{2G_\pm(n,n)} - \tfrac{G_\pm(n+1,m)G_\pm(m,n+1)}{2G_\pm(n+1,n+1)}\Bigr]\\
&\qquad\quad + \sum\limits_{m}\Bigl[\log(2a_m)\tfrac{2|n-m|e^{-2\kappa|n-m|}}{\sinh\kappa} \pm b_m\tfrac{2|n + \frac12 - m|e^{-2\kappa|n + \frac12 - m|}}{\sinh\kappa}\Bigr]\\
&\qquad= - \gamma_n^\pm + \sigma_{n+1}^\pm - \sigma_n^\pm,
\end{align*}
where we take
\begin{align*}
\sigma_n^\pm &= \tfrac1{2\sinh\kappa} + \tfrac12\sum_m \sgn(n - m - \tfrac12)\tfrac{G_\pm(n,m)G_\pm(m,n)}{G_\pm(n,n)}\\
&\quad - \sum\limits_m \Bigl[\log(2a_m) \tfrac{(n - 1 - m)e^{-2\kappa|n - m - \frac12|}}{\sinh^2(\kappa)} \pm b_m\tfrac{(n-\frac12 - m)e^{-2\kappa|n - m|}}{\sinh^2(\kappa)}\Bigr].
\end{align*}
Note that \(\sigma_n^\pm\) is well-defined and vanishes as \(n\to\infty\) thanks to Lemma~\ref{L:Toda G}. Under the assumption \eqref{Toda l1}, we have \(\sigma_n^\pm\in\ell^1\) and hence
\begin{align}\label{rho deriv}
\tfrac1{\sinh\kappa}\partial_\kappa\sum_n\rho_n^\pm = - \sum_n\gamma_n^\pm.
\end{align}
On the other hand, \eqref{Toda HS small} implies that the expression
\begin{align}
F_\pm:=  - \log \det\bigl[ L_\pm / L_0 \bigr] =\sum_{\ell\geq 1} \tfrac{1}{\ell} (-1)^\ell \tr\Bigl\{ \big([L_\pm - L_0]L_0^{-1}\bigr)^{\ell} \Bigr\}
\end{align}
converges uniformly for $\kappa\geq 1$ and parameters $(a_m,b_m)\in B^\kappa_\delta$ satisfying \eqref{Toda l1}. When taking the derivative with respect to the spectral parameter, we get
$$
\tfrac1{\sinh\kappa}\partial_\kappa F_\pm=\sum_{\ell\geq 1}  (-1)^{\ell+1} \tr\Bigl\{L_0^{-1} \big( [L_\pm - L_0] L_0^{-1}\bigr)^{\ell} \Bigr\}
	= - \tr\Bigl\{L_\pm^{-1} - L_0^{-1} \Bigr\}.
$$
Subtracting this from \eqref{rho deriv} and integrating with respect to the spectral parameter to infinity, we obtain \eqref{Toda conserve det}.

Next, we turn to the issue of convexity. This trivializes verifying non-negativity, which in turn aids in the proof of summability. We will prove convexity via the second derivative test.  (In view of \eqref{Toda G series},  differentiability is not an issue.) Given a direction $(c_m,d_m) \in\ell^2(\Z)\times\ell^2(\Z)$ and a function \(F\) on $B_\delta^\kappa$, we define
\[
D^kF = \tfrac{d^k}{ds^k}\Big|_{s=0} F\bigl(a_m e^{sc_m},b_m + sd_m\bigr).
\]
Our goal is to show $D^2 \gamma^\pm_n\geq 0$ and $D^2 \rho^\pm_n\geq 0$.  Note that the renormalization terms, appearing as sums over $m$ in \eqref{Toda gamma} and \eqref{Toda rho}, are linear in $\log(2a_m)$ and in $b_m$ and so these will not affect the convexity computations.  

Let us begin with $\gamma_n$; we will suspend the $\pm$ notations since the distinction plays virtually no role in the computations that follow. For concreteness, we consider the \(+\) case. As a preliminary, we compute
\begin{align}\label{D2 Gnm}
D^2G(n,m) = 2\<\delta_n,L^{-1}(D L)L^{-1}(DL)L^{-1}\delta_m\> - \<\delta_n,L^{-1}(D^2L)L^{-1}\delta_m\>,
\end{align}
using the resolvent identity.  The derivatives of the operator $L$ are given by
\begin{align*}
(DLf)_n &= -a_n c_n f_{n+1} - a_{n-1} c_{n-1}f_{n-1} - d_nf_n, \\
(D^2Lf)_n &= -a_n c_n^2f_{n+1} - a_{n-1} c_{n-1}^2 f_{n-1}.
\end{align*}

Note that the first term in \eqref{D2 Gnm} is positive when $m=n$ because $L^{-1}$ is positive definite.  Regarding the second term in \eqref{D2 Gnm}, we have
\begin{align}\label{-GD2LG +ve}
- \<\delta_n,L^{-1}(D^2L)L^{-1}\delta_m\> \geq 0 \quad \text{for all $m,n\in \Z$.}
\end{align}
This is because the Green's function is positive and the individual matrix entries of $D^2 L$ are less than or equal to zero.  Thus $\gamma_n$ is convex.  This convexity also guarantees the non-negativity of $\gamma_n$ because
$$
\gamma_n =0 \qtq{and} D \gamma_n = 0 \quad\text{when $a_m \equiv \tfrac12$ and $b_m\equiv 0$,}
$$
irrespective of the direction in which the derivative is taken.

Let us begin our discussion of $\rho_n$ in the same place.  Direct computation shows
\begin{align}\label{rho values}
\rho_n =0 \qtq{and} D \rho_n = 0 \quad\text{when $a_m \equiv \tfrac12$ and $b_m\equiv 0$,}
\end{align}
irrespective of the direction in which the derivative is taken.  One other first derivative computation is important.  Using \eqref{QuadraticID} we obtain
\begin{align*}
\tfrac{\partial\ }{\partial a_n} \rho_n = \frac{\frac{\partial\ }{\partial a_n} [a_nG(n,n+1)]}{2a_nG(n,n+1)[1 + a_nG(n,n+1)]} - \frac{1}{a_n} =0.
\end{align*}
This shows that $\rho_n$ fails to be strictly convex.  More constructively, it shows that all terms in $D^2 \rho_n$ involving $c_n$ vanish.  Thus we may assume that $c_n=0$ henceforth.  This will simplify matters considerably when we invoke it later.  For the moment however, we use \eqref{alt rho'} to compute
\begin{equation}\begin{aligned}
D^2\rho_n &= - \Bigl[\tfrac{DG(n+1,n)}{G(n+1,n)}\Bigr]^2 + \tfrac12\Bigl[\tfrac{DG(n,n)}{G(n,n)}\Bigr]^2 + \tfrac12\Bigl[\tfrac{DG(n+1,n+1)}{G(n+1,n+1)}\Bigr]^2 \\
&\quad\ {}+ \tfrac{D^2G(n+1,n)}{G(n+1,n)} - \tfrac{D^2G(n,n)}{2G(n,n)}  - \tfrac{D^2G(n+1,n+1)}{2G(n+1,n+1)} .
\end{aligned}\end{equation}

The next step is to expand out the second line here using \eqref{D2 Gnm}.  Rather than writing down the result immediately, let us focus on the resulting terms that involve $D^2L$ (as opposed to those quadratic in $DL$):
\begin{align*}
&{} - \tfrac{\<\delta_{n+1},L^{-1}(D^2L)L^{-1}\delta_n\>}{G(n+1,n)} + \tfrac{\<\delta_n,L^{-1}(D^2L)L^{-1}\delta_n\>}{2G(n,n)} + \tfrac{\<\delta_{n+1},L^{-1}(D^2L)L^{-1}\delta_{n+1}\>}{2G(n+1,n+1)}\\
&\qquad= - \tfrac12\sum\limits_{m,k} \<\delta_m,(D^2L)\delta_k\>G(k,n)\Bigl[\tfrac{G(n+1,m)}{G(n+1,n)} - \tfrac{G(n,m)}{G(n,n)}\Bigr]\\
&\qquad\quad - \tfrac12\sum\limits_{m,k} G(n+1,m)\<\delta_m,(D^2L)\delta_k\>\Bigl[\tfrac{G(k,n)}{G(n+1,n)} - \tfrac{G(k,n+1)}{G(n+1,n+1)}\Bigr].
\end{align*}
By \eqref{D1} and \eqref{U1}, each quantity in square brackets here is non-negative.  Combining this with the fact that every entry in the matrix $D^2L$ is less than or equal to zero, we see that this whole expression is non-negative.  Thus,
\begin{align}
D^2\rho_n &\geq \sum_{m,k,r,\ell}\<\delta_m,(DL)\delta_k\>\<\delta_\ell,(DL)\delta_r\>K_n(m,k,\ell,r)\label{D2''},
\end{align}
where
\begin{align*}
K_n(m,k,\ell,r) &= \tfrac{G(n+1,m)G(k,\ell)G(r,n)}{G(n+1,n)} + \tfrac{G(n,m)G(k,\ell)G(r,n+1)}{G(n+1,n)}\\
&\quad - \tfrac{G(n+1,m)G(k,n)G(n+1,\ell)G(r,n)}{2G(n+1,n)^2} - \tfrac{G(n,m)G(k,n+1)G(n,\ell)G(r,n+1)}{2G(n+1,n)^2}\\
&\quad - \tfrac{G(n,m)G(k,\ell)G(r,n)}{G(n,n)} + \tfrac{G(n,m)G(k,n)G(n,\ell)G(r,n)}{2G(n,n)^2}\\
&\quad - \tfrac{G(n+1,m)G(k,\ell)G(r,n+1)}{G(n+1,n+1)}  + \tfrac{G(n+1,m)G(k,n+1)G(n+1,\ell)G(r,n+1)}{2G(n+1,n+1)^2}.
\end{align*}

Due to the vanishing of $c_n$, we see that $\<\delta_m,(DL)\delta_k\>$ vanishes unless $m,k\leq n$ or $m,k>n$.  Similarly, $\<\delta_\ell,(DL)\delta_r\>$ vanishes unless $\ell$ and $r$ lie on the same side of $n$.  This allows us to restrict our attention to just three cases, in each of which we may then simplify the formula for $K_n$ by exploiting \eqref{D1}, \eqref{U1}, and \eqref{Middle}:
\begin{align*}
K_n &\!=\!0,\quad \text{if $m,k\leq n < \ell,r$ or $\ell,r\leq n < m,k$,} \\
K_n &\!=\! \Bigl[\tfrac{G(k,\ell)}{G(n,n)} - \tfrac{G(k,n)G(n,\ell)}{G(n,n)^2}\Bigr]G(n,m)G(r,n)\Bigl[1 - \tfrac{G(n+1,n)^2}{G(n,n)G(n+1,n+1)}\Bigr]\\
&\quad + \tfrac{G(n,m)G(k,n)G(n,\ell)G(r,n)}{2G(n,n)^2}\Bigl[1 - \tfrac{G(n+1,n)^2}{G(n,n)G(n+1,n+1)}\Bigr]^2, \quad \text{if $m,k,\ell,r \leq n$,}\\
K_n &\!=\! \Bigl[\tfrac{G(k,\ell)}{G(n+1,n+1)}\! - \!\tfrac{G(k,n+1)G(n+1,\ell)}{G(n+1,n+1)^2}\Bigr]G(n+1,m)G(r,n+1) \Bigl[1\! -\! \tfrac{G(n+1,n)^2}{G(n,n)G(n+1,n+1)}\Bigr]\\
&\quad + \tfrac{G(n+1,m)G(k,n+1)G(n+1,\ell)G(r,n+1)}{2G(n+1,n+1)^2}\Bigl[1 - \tfrac{G(n+1,n)^2}{G(n,n)G(n+1,n+1)}\Bigr]^2, \; \text{if $m,k,\ell,r > n$.}
\end{align*}

Using \eqref{D2''}, we may now see that $D^2\rho_n \geq 0$.  Let us explain here why the sum over $m,k,\ell,r \leq n$ is non-negative; the argument in the case $m,k,\ell,r > n$ is analogous.
When $m,k,\ell,r \leq n$, the contribution of the second line in the expression of $K_n$ is given by
$$
\tfrac1{2G(n,n)^2}\Bigl[1 - \tfrac{G(n+1,n)^2}{G(n,n)G(n+1,n+1)}\Bigr]^2 \Bigl\{ \sum_{m,k\leq n} G(n,m) G(k,n) \langle \delta_m,  (DL)\delta_k\rangle \Bigr\}^2\geq 0.
$$
We turn now to the contribution of the first line in the expression of $K_n$.  Using \eqref{QuadraticID}, we observe that
\[
1 - \tfrac{G(n+1,n)^2}{G(n,n)G(n+1,n+1)} = \tfrac1{1 + a_nG(n,n+1)}\geq 0.
\]
Moreover, writing
$$
\psi_k:= \sum_{m\leq n} G(n,m)  \langle \delta_m,  (DL)\delta_k\rangle,
$$
and recalling that \(c_n = 0\), we find
\begin{align*}
\sum_{m,k,\ell,r\leq n}&\Bigl[\tfrac{G(k,\ell)}{G(n,n)} - \tfrac{G(k,n)G(n,\ell)}{G(n,n)^2}\Bigr]G(n,m)G(r,n)\langle \delta_m,  (DL)\delta_k\rangle\langle \delta_\ell,  (DL)\delta_r\rangle\\
&= \tfrac1{G(n,n)^2} \langle \psi, L^{-1} \psi\rangle \langle \delta_n, L^{-1}\delta_n\rangle - \langle \delta_n, L^{-1} \psi\rangle^2\geq 0, 
\end{align*}
where the inequality follows from the fact that $L^{-1}$ is positive definite and an application of Cauchy--Schwarz.  This completes the proof of the convexity of $\rho_n$.

The non-negativity of $\rho_n$ follows from its convexity and \eqref{rho values}.

Finally, we turn to the issue of summability. For sequences satisfying \eqref{Toda l1}, we use \eqref{Toda no log} to write the identities \eqref{Toda conserve det}, \eqref{Toda conserve tr} as
\begin{align*}
\sum_n \rho_n^\pm &= \sum_{\ell\geq 2} \tfrac{1}{\ell} (-1)^\ell \tr\Bigl\{ \big([L_\pm - L_0]L_0^{-1}\bigr)^{\ell} \Bigr\} + \tfrac{ e^{-\kappa} }{\sinh(\kappa)} \sum_m V\bigl(-\log(2a_m)\bigr),\\
\sum_n \gamma_n^\pm &= \sum_{\ell\geq 2} (-1)^\ell \tr\Bigl\{L_0^{-1}\big([L_\pm - L_0]L_0^{-1}\bigr)^{\ell} \Bigr\} + \tfrac1{\sinh^3(\kappa)} \sum_m V\bigl(-\log(2a_m)\bigr).
\end{align*}
From \eqref{Toda HS small}, the right-hand sides converge for parameters $(a_m,b_m)\in B^\kappa_\delta$. That the sequences $\rho_n^\pm,\gamma_n^\pm\in\ell^1$ for parameters $(a_m,b_m)\in B^\kappa_\delta$ then follows from the positivity of $\rho_n^{\pm},\gamma_n^\pm$, approximating the parameters by sequences satisfying \eqref{Toda l1}.

Turning to the summability of the currents, it is evident from the resolvent expansion \eqref{Toda G series} and the estimate \eqref{Toda HS small} that \({}^\gamma\!j_n^\pm\in\ell^1\). For \(j_n\), first note that \eqref{Toda HS small} ensures that \(G_{\pm}(n,n)\gtrsim 1\), uniformly in \(n\), and consequently that
\[
\sinh(\kappa) + \sinh^2(\kappa)\langle\delta_n, L_0^{-1} (L_\pm - L_0) L_0^{-1} \delta_n\rangle - \tfrac1{G_\pm(n,n)} \in \ell^1.
\]
That \(j_n\in \ell^1\) then follows from \eqref{Toda no log}.
\end{proof}

\section{Ablowitz--Ladik}\label{S:AL} 

The Ablowitz--Ladik system \cite{AL1,AL2} is an integrable discrete form of the cubic non-linear Schr\"odinger equation.  It comes in two flavors, which we may write together via the expedient of defining $\beta_n:=\bar \alpha_n$ in the defocusing case and $\beta_n:=-\bar \alpha_n$ in the focusing case:
\begin{align}\label{AL eqn}\tag{AL}
i \partial_t \alpha_n = 2\alpha_n - (1-\alpha_n\beta_n)(\alpha_{n+1} + \alpha_{n-1}).
\end{align}
In the defocusing case, it is required that all $\alpha_n\in\D$, the open unit disk in $\C$; as we will see below, this property is preserved by the flow.  In the focusing case, $\alpha_n\in\C$ are unrestricted.

It is a trivial matter to see that these flows are locally well-posed on $\ell^2$.  This extends to global well-posedness via the conservation of
\[
M := -\sum_{n\in \Z}\log(1 - \alpha_n\beta_n).
\]
This conservation law also guarantees that the restriction $|\alpha_n|<1$ is preserved by the defocusing flow.  Another important conserved quantity is
\[
H := \sum_{n\in \Z}\Bigl(-\alpha_n\beta_{n+1} - \alpha_{n+1}\beta_{n} - 2\log(1 - \alpha_n\beta_n)\Bigr),
\]
which serves as the Hamiltonian for \eqref{AL eqn} with respect to the Poisson structure
\[
\{F,G\} := \tfrac1i\sum\limits_{n\in \Z}(1 - \alpha_n\beta_n)\Bigl[\tfrac{\p F}{\p \alpha_n}\tfrac{\p G}{\p \beta_n} - \tfrac{\p F}{\p \beta_n}\tfrac{\p G}{\p\alpha_n}\Bigr].
\]

The evolutions \eqref{AL eqn} admit a zero curvature representation based on the matrices
\begin{align*}
U_n(z) := \begin{bmatrix}z & \alpha_n \\ \beta_n & z^{-1} \end{bmatrix}
	\ \text{ and }\ 
V_n(z) := i \begin{bmatrix} z^2 - 1 - \alpha_n\beta_{n-1} & z\alpha_{n}-z^{-1}\alpha_{n-1} \\ z\beta_{n-1}-z^{-1}\beta_{n} & 1+\alpha_{n-1}\beta_n-z^{-2} \end{bmatrix}.
\end{align*}
Concretely, \eqref{AL eqn} is equivalent to $\partial_t U_n = V_{n+1} U_n - U_n V_n$.  This representation can be reorganized into one resembling \eqref{E:LP} in several different ways.  Although it is possible to give such a Lax representation where the spectral parameter appears in the classical way (see \cite{Nenciu}), we choose a different path that is more conducive to drawing analogies with Section~\ref{S:NLS}.

Let $\mathbf U$ and $\mathbf V$ denote the operators on $\ell^2(\Z)\otimes \C^2$ defined by applying $U_n$ and $V_n$ at each lattice site.  Specifically,
\begin{align*}
\biggl(\mathbf U \begin{bmatrix}a  \\ b \end{bmatrix}\biggr)_n = U_n\begin{bmatrix}a_n  \\ b_n \end{bmatrix}.
\end{align*}
We also define $S$ to be the scalar (left-)shift operator $(Sf)_n = f_{n+1}$ and $\mathbf{S}=S\otimes I$ as the vector analogue on $\ell^2(\Z)\otimes \C^2$:
\begin{align*}
\biggl(\mathbf S \begin{bmatrix}a  \\ b \end{bmatrix}\biggr)_n = \begin{bmatrix}a_{n+1}  \\ b_{n+1} \end{bmatrix}.
\end{align*}
Writing $\mathbf I =I\otimes I$, it is elementary to see that 
\begin{align}\label{E:my LP}
\partial_t (\mathbf S^{-1}\mathbf  U - \mathbf I ) = [\mathbf  V ,\ \mathbf S^{-1} \mathbf U - \mathbf I ]
\end{align}
is equivalent to the zero curvature condition stated earlier.  Nevertheless, we do not choose $\mathbf S^{-1} \mathbf U - \mathbf I$ as our basic object, but rather
\begin{align}\label{AL L}
\mathbf L(z;\alpha) := \mathbf U - \mathbf S = \begin{bmatrix}z - S & \alpha \\ \beta & z^{-1}- S\end{bmatrix} .
\end{align}
In the special case $\alpha\equiv0$, we write $\mathbf U_0$ and  $\mathbf L_0$.

Our naive expectations \eqref{conserve det} and \eqref{conserve tr} were expressed in terms of the operator appearing in \eqref{E:LP}.   Our analogue, \eqref{E:my LP}, involves $\mathbf S^{-1}\mathbf  U - \mathbf I$ rather than $\mathbf L$.   From the simple relation between the two, we find that our original predictions are equivalent to 
\begin{gather}
\partial_t \log \det \bigl[ \mathbf L \, \mathbf L_0^{-1} \bigr] = 0,\label{AL det}
\\
\partial_t \tr \bigl\{ \bigl(\mathbf L^{-1} - \mathbf L_0^{-1}\bigr) \mathbf S \bigr\} =0.\label{AL tr prediction}
\end{gather}
However, the second of these is vacuous. Indeed, the identity \eqref{trID} shows that
\begin{equation}\label{Vacuous}
\tr \bigl\{ \bigl(\mathbf L^{-1} - \mathbf L_0^{-1}\bigr) \mathbf S \bigr\} = 0.
\end{equation}
Instead, we employ the Pauli matrix \(\sigma_3 = \bigl[\!\begin{smallmatrix} 1 & 0 \\ 0 & -1\end{smallmatrix}\!\bigr]\) and replace \eqref{AL tr prediction} by
\begin{equation}\label{AL tr}
\partial_t \tr \bigl\{ \bigl(\mathbf L^{-1} - \mathbf L_0^{-1}\bigr) \mathbf S\sigma_3 \bigr\} = 0.
\end{equation}
The introduction of $\sigma_3$ is easily intuited by comparing the continuum limit of $\mathbf L$ to the Lax operator used in Section~\ref{S:NLS}: there is sign flip in the bottom row.  The presence of the shift operator cannot be explained by such naive reasoning --- it drops out in the continuum limit.  This underlines the point made in the introduction that the discrete laws discussed here are more fundamental than their continuum analogues.

To describe our microscopic conservation laws, we first introduce the matrix-valued Green's function \(G(n,m;z)\) for \(\mb L^{-1}\), which will shortly be shown to be well-defined whenever \(|z|>1\) is sufficiently large relative to the mass. Further, from \eqref{E:my LP} we obtain
\begin{equation}\label{AL dtG}
\p_tG(n,m) = V_nG(n,m) - G(n,m)V_{m+1}.
\end{equation}
We then introduce the densities attendant to the conservation laws \eqref{AL det} and \eqref{AL tr}
\begin{align}
\rho_n &:= \tfrac12\log\Bigl[1 + \tfrac{\alpha_nG_{21}(n,n)}{zG_{11}(n,n)}\Bigr] + \tfrac12\log\Bigl[1 + \tfrac{\beta_{n+1}G_{12}(n+1,n+1)}{zG_{11}(n+1,n+1)}\Bigr],\\
\gamma_n &:= G_{11}(n+1,n) - G_{22}(n+1,n) - 1,
\end{align}
and the corresponding currents
\begin{align}
j_n &:= \tfrac i2\bigl[z\alpha_n - z^{-1}\alpha_{n-1}\bigr]\tfrac{G_{21}(n,n)}{G_{11}(n,n)} + \tfrac i2\bigl[z\beta_n - z^{-1}\beta_{n+1}\bigr]\tfrac{G_{12}(n,n)}{G_{11}(n,n)}\\
&\quad  - \tfrac i2\alpha_n\beta_{n-1} - \tfrac i2\alpha_{n+1}\beta_n,\notag\\
{}^\gamma\!j_n &:= 2izG_{11}(n+1,n-1) +2iz^{-1}\bigl[G_{22}(n+1,n-1)+z^{-1}\Bigr].
\end{align}

Analogously to the Toda Lattice, for \(|z|>1\) we introduce the ball
\[
B_\delta^z = \Bigl\{\alpha_m:|M|e^{|M|} < \delta^2 \tfrac{|z|^2 - 1}{|z|}\Bigr\}.
\]
Our main result is then the following:

\begin{thrm}\label{T:AL}
There exists $\delta>0$ so that for every $|z|\geq 2$ and every $\alpha_m\in B^z_\delta$, the sequences $\rho_n,\gamma_n\in\ell^1$ represent conserved densities for the Ablowitz--Ladik equation.  Concretely,
\begin{align}\label{AL currents}
\partial_t \rho_n = j_{n+1} - j_n \qtq{and}  \partial_t \gamma_n = {}^\gamma\!j_{n+1} - {}^\gamma\!j_n,
\end{align}
where \(j_n,{}^\gamma\!j_n\in \ell^1\).

Further, these yield macroscopic conservation laws of the sought-after form:
\begin{gather}\label{AL conserve det}
\sum_n \rho_n = \log \det \bigl[ \mathbf L \, \mathbf L_0^{-1} \bigr], \\
\label{AL conserve tr}
\sum_n \gamma_n = \tr \bigl\{ \bigl(\mathbf L^{-1} - \mathbf L_0^{-1}\bigr) \mathbf S\sigma_3 \bigr\}.
\end{gather}
\end{thrm}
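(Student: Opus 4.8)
The plan is to mirror the structure of the Toda proof (Theorem~\ref{T:Toda}): first establish a Green's function lemma analogous to Lemma~\ref{L:Toda G}, then verify the microscopic identities \eqref{AL currents} by direct computation using the dynamical equation \eqref{AL dtG}, and finally integrate to recover the macroscopic laws \eqref{AL conserve det}--\eqref{AL conserve tr}. The ball $B^z_\delta$ is defined precisely so that a Neumann series for $\mb L^{-1}$ converges in Hilbert--Schmidt class: we would show that $\|\mb L_0^{-1}\|_\op$ is controlled by $|z|/(|z|^2-1)$ (by Fourier analysis, since $z-S$ and $z^{-1}-S$ are Fourier multipliers), that $\|\mb L - \mb L_0\|_{\I_2}^2 \lesssim \|\alpha_m\|_{\ell^2}^2 \lesssim |M|e^{|M|}$, and hence that $\|(\mb L - \mb L_0)\mb L_0^{-1}\|_{\I_2} \lesssim \delta$ throughout $B^z_\delta$. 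This both defines $G(n,m;z)$ and guarantees the convergence of the perturbation determinant expansion. Along the way I would record the algebraic identities relating the diagonal and near-diagonal entries of $G$ (the analogues of \eqref{QuadraticID}--\eqref{Middle}), derived by the same summation-of-a-telescoping-current trick used in Lemma~\ref{L:Toda G}, since the logarithms defining $\rho_n$ are built from such ratios of Green's function entries.

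\textbf{The microscopic conservation laws.}
Granting the Green's function and its identities, I would verify \eqref{AL currents} by substituting \eqref{AL dtG} into $\partial_t\rho_n$ and $\partial_t\gamma_n$. For $\gamma_n = G_{11}(n+1,n)-G_{22}(n+1,n)-1$ this is the more mechanical of the two: one computes $\partial_t\gamma_n$ from \eqref{AL dtG}, reads off the matrix entries of $V_n$ and $V_{n+1}$ in terms of $\alpha,\beta$, and checks that the result telescopes as ${}^\gamma\!j_{n+1}-{}^\gamma\!j_n$. For $\rho_n$, which is a sum of two logarithms of the form $\tfrac12\log[1 + \alpha_n G_{21}(n,n)/(zG_{11}(n,n))]$, I would differentiate using the chain rule, express $\partial_t$ of each constituent Green's function entry via \eqref{AL dtG}, and then invoke the algebraic identities to collapse the off-diagonal terms into a perfect difference $j_{n+1}-j_n$. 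The expected shape of $j_n$ in the statement, with its $zG_{21}/G_{11}$ and $z^{-1}G_{12}/G_{11}$ structure, strongly suggests which identities must be applied.

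\textbf{From microscopic to macroscopic, and summability.}
For \eqref{AL conserve det} I would follow the Toda template: differentiate $\log\det[\mb L\,\mb L_0^{-1}]$ in the spectral parameter using the convergent logarithmic-trace expansion furnished by the Neumann estimate, identify the result with $\tr\{\mb L^{-1}-\mb L_0^{-1}\}$ (or the appropriate $\sigma_3$-weighted shift trace), and match this against the spectral-parameter derivative of $\sum_n\rho_n$; integrating in the spectral parameter out to the regime where both sides vanish yields the identity. The law \eqref{AL conserve tr} should follow more directly from the definition of $\gamma_n$ together with the identity \eqref{trID} that already produced the vanishing \eqref{Vacuous}. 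Summability $\rho_n,\gamma_n\in\ell^1$ would be extracted from the Hilbert--Schmidt bound \eqref{Toda HS small}-analogue, writing $\sum_n\rho_n$ and $\sum_n\gamma_n$ as convergent traces of high powers of $(\mb L-\mb L_0)\mb L_0^{-1}$. \emph{The main obstacle} I anticipate is the $\rho_n$ current computation: unlike the Toda case, the Ablowitz--Ladik Lax operator is non-self-adjoint and the Green's function is genuinely matrix-valued with asymmetric $(1,1)$ versus $(2,2)$ behavior, so the convexity shortcut available for Toda is unavailable and one must instead carry the off-diagonal entries $G_{12},G_{21}$ through the algebra and verify by hand that the time derivative telescopes exactly, with no leftover non-divergence terms. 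Keeping careful track of the powers of $z$ and $z^{-1}$, and of which of the two logarithms in $\rho_n$ absorbs which boundary contribution, is where the delicate bookkeeping lies.
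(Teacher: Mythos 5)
Your strategy coincides with the paper's own proof in all of its main lines: Neumann-series construction of $G(n,m;z)$ in Hilbert--Schmidt class on $B^z_\delta$, Green's-function identities obtained by the same telescoping trick as in Lemma~\ref{L:Toda G} (these are \eqref{detID}--\eqref{AL nU2}), verification of \eqref{AL currents} by inserting \eqref{AL dtG} into $\partial_t\gamma_n$ and $\partial_t\rho_n$ and collapsing the result via those identities, \eqref{AL conserve tr} read off directly from the definition of $\gamma_n$ together with \eqref{trID}, and \eqref{AL conserve det} obtained by matching $z\partial_z\log\det[\mb L\,\mb L_0^{-1}]$ with the $\sigma_3$-weighted shift trace and integrating in the spectral parameter from infinity.

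There is, however, one step that fails as you describe it: the $\ell^1$ claims. You propose to obtain $\rho_n,\gamma_n\in\ell^1$ by ``writing $\sum_n\rho_n$ and $\sum_n\gamma_n$ as convergent traces of high powers of $(\mb L-\mb L_0)\mb L_0^{-1}$.'' Convergence of such a trace expansion controls the value of the sum, not absolute summability of the sequence. In the Toda proof this very gap was bridged by the positivity of $\rho_n^\pm,\gamma_n^\pm$ (itself a consequence of convexity) together with an approximation argument; here the densities are complex-valued and, as you yourself note in another context, no positivity or convexity shortcut is available, so the trace route cannot deliver $\ell^1$. The paper's actual route is entry-wise: because every term of the Neumann series beyond the leading one is a product of at least two Hilbert--Schmidt factors, Cauchy--Schwarz gives $\sup_m\sum_n\bigl|G_{11}(n,n+m)-z^{-m-1}\bbo_{m\geq 0}\bigr|\lesssim\delta^2$ (and the analogous bound for $G_{22}$), while the kernels $G_{12},G_{21}$ are square-summable jointly in $(n,m)$. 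Then $\gamma_n=2G_{11}(n+1,n)$ (a consequence of \eqref{trID}) lies in $\ell^1$ by the first bound; $\rho_n\in\ell^1$ follows from $|\log(1+w)|\lesssim|w|$, Cauchy--Schwarz pairing $\alpha_n,\beta_{n+1}\in\ell^2$ against the $\ell^2$ diagonals of $G_{21},G_{12}$, and the lower bound \eqref{Non-deg} on $G_{11}$; and the same bounds give $j_n,{}^\gamma\!j_n\in\ell^1$ --- a claim of the theorem that your sketch does not otherwise address.
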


We begin our analysis with some basic properties of the resolvent of the (scalar) shift operator:

\begin{lem}\label{l:S} For $\alpha_m\in\ell^2(\Z)$ and $|z|> 1$,
\begin{gather*}
\langle \delta_n,\  (z-S)^{-1} \delta_m \rangle = z^{n-m-1}\bbo_{n\leq m} \qtq{and} 
\langle \delta_n,\  (S-z^{-1})^{-1} \delta_m \rangle =z^{m-n+1}\bbo_{n>m}.
\end{gather*}
In particular, the Green's function for \(\mb L_0^{-1}\) is
\begin{equation}\label{G0-AL}
G_0(n,m;z) = \begin{bmatrix} z^{n-m-1}\bbo_{n\leq m} & 0 \\ 0 & -z^{m-n+1}\bbo_{n>m} \end{bmatrix}.
\end{equation}

Moreover, defining operators
\begin{gather}
\Lambda := \alpha (S-z^{-1})^{-1} \qtq{and} \Gamma  := \beta (z-S)^{-1},
\end{gather}
we have the following Hilbert--Schmidt norms:
\begin{gather}\label{AL HS}
\bigl\| \Lambda \bigr\|_{\I_2}^2 = \tfrac{|z|^2}{|z|^2 - 1} \|\alpha\|_{\ell^2}^2
\qtq{and}
\bigl\| \Gamma   \bigr\|_{\I_2}^2  = \tfrac{1}{|z|^2 - 1} \|\alpha\|_{\ell^2}^2.
\end{gather}
\end{lem}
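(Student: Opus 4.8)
The plan is to derive everything from the Neumann series for the two scalar resolvents, exploiting that $S$ is unitary on $\ell^2(\Z)$ and hence has spectrum on the unit circle. For $|z|>1$ the point $z$ lies outside this circle while $z^{-1}$ lies inside it, so the two resolvents expand as convergent power series in opposite powers of the shift. Concretely, I would first write $(z-S)^{-1}=\sum_{k\geq 0}z^{-k-1}S^k$, whose convergence in operator norm is guaranteed by $\|z^{-1}S\|_{\op}=|z|^{-1}<1$, and similarly $(S-z^{-1})^{-1}=S^{-1}(1-z^{-1}S^{-1})^{-1}=\sum_{k\geq 0}z^{-k}S^{-k-1}$, using $\|z^{-1}S^{-1}\|_{\op}=|z|^{-1}<1$.

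Next I would read off the matrix entries from $S^k\delta_m=\delta_{m-k}$ and $S^{-k}\delta_m=\delta_{m+k}$. The only point requiring genuine care is the bookkeeping of the indicators: in the first series the surviving term has $k=m-n\geq 0$, forcing $n\leq m$ and the value $z^{n-m-1}$, whereas in the second the surviving term has $k=n-m-1\geq 0$, forcing $n>m$ and the value $z^{m-n+1}$. This yields the two stated formulas. The Green's function for $\mb L_0^{-1}$ then follows from the block-diagonal structure $\mathbf L_0=\operatorname{diag}(z-S,\,z^{-1}-S)$ together with the sign flip $(z^{-1}-S)^{-1}=-(S-z^{-1})^{-1}$, which accounts for the minus sign in the lower-right entry of \eqref{G0-AL}.

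Finally, for the Hilbert--Schmidt norms I would use $\|T\|_{\I_2}^2=\sum_{n,m}|\langle\delta_n,T\delta_m\rangle|^2$. Since $\alpha$ and $\beta$ act as diagonal multiplication, the entries of $\Lambda$ and $\Gamma$ are $\alpha_n z^{m-n+1}\bbo_{n>m}$ and $\beta_n z^{n-m-1}\bbo_{n\leq m}$, respectively. Summing $|z|^{2(m-n+1)}$ over $m<n$ (setting $j=n-m\geq 1$) gives the geometric series $\sum_{j\geq 1}|z|^{2-2j}=\tfrac{|z|^2}{|z|^2-1}$, while summing $|z|^{2(n-m-1)}$ over $m\geq n$ (setting $j=m-n\geq 0$) gives $\sum_{j\geq 0}|z|^{-2j-2}=\tfrac{1}{|z|^2-1}$; both converge precisely because $|z|>1$. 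Factoring out $\sum_n|\alpha_n|^2$ and $\sum_n|\beta_n|^2$ then gives the two claimed identities, where the appearance of $\|\alpha\|_{\ell^2}^2$ (rather than $\|\beta\|_{\ell^2}^2$) in the estimate for $\Gamma$ uses $|\beta_n|=|\alpha_n|$, valid in both the focusing and defocusing cases.

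I do not expect a substantial obstacle here: the lemma is purely computational. The only things to watch are orienting the two indicators correctly and confirming that the geometric series converge under the stated hypothesis $|z|>1$ (the stronger requirement $|z|\geq 2$ is needed only for the later applications, not for this lemma).
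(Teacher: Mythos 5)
Your proposal is correct, and it is exactly the intended argument: the paper states this lemma without proof, treating it as an elementary computation, and your Neumann series expansion of the two resolvents (convergent in operator norm since $\|z^{-1}S\|_\op=\|z^{-1}S^{-1}\|_\op=|z|^{-1}<1$), the entry bookkeeping via $S^k\delta_m=\delta_{m-k}$, and the geometric-series evaluation of the Hilbert--Schmidt norms supply precisely the missing details. Your closing remarks --- that the minus sign in \eqref{G0-AL} comes from $(z^{-1}-S)^{-1}=-(S-z^{-1})^{-1}$, that $|\beta_n|=|\alpha_n|$ explains why both norms in \eqref{AL HS} involve $\|\alpha\|_{\ell^2}$, and that $|z|>1$ suffices here with $|z|\geq 2$ reserved for later lemmas --- are all accurate.
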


Using the previous lemma, it is easy to construct $\mathbf L^{-1}$ by Neumann series:

\begin{lem}\label{L:AL GF}
There exists \(\delta>0\) so that for every \(|z|\geq 2\) and \(\alpha_m\in B_\delta^z\),
$\mathbf L$ is invertible. The matrix-valued Green's function has entries
\begin{align*}
G_{11}(n,m) &=  \sum_{k\geq 0} (-1)^k \langle \delta_n, (z-S)^{-1} [\Lambda\Gamma]^k \delta_m\rangle, \\
G_{12}(n,m) &=  \sum_{k\geq 0} (-1)^k \langle \delta_n, (z-S)^{-1} \Lambda [\Gamma \Lambda]^k \delta_m\rangle, \\
G_{21}(n,m) &=  \sum_{k\geq 0} (-1)^k \langle \delta_n, (S-z^{-1})^{-1} \Gamma [\Lambda \Gamma]^k \delta_m\rangle,\\
G_{22}(n,m) &= -\sum_{k\geq 0} (-1)^k \langle \delta_n, (S-z^{-1})^{-1} [\Gamma \Lambda]^k \delta_m\rangle.
\end{align*}
Moreover, we have the following identities:
\begin{align}
\det G(n,m) &= 0,\label{detID}\\
\tr G(n+1,n) &= -1.\label{trID}
\end{align}
In addition, we have
\begin{align}
\tfrac{G_{12}(n+1,k)}{1 + G_{11}(n+1,n)} &= \tfrac{G_{12}(n,k)}{G_{11}(n,n)} + \tfrac{\alpha_nG_{22}(n+1,k)\bbo_{k<n}}{(1 - \alpha_n\beta_n)(1 + G_{11}(n+1,n))G_{11}(n,n)}\label{AL nD},\\
\tfrac{G_{21}(k,n)}{1 + G_{11}(n+1,n)} &= \tfrac{G_{21}(k,n+1)}{G_{11}(n+1,n+1)} + \tfrac{ \beta_{n+1}G_{22}(k,n)\bbo_{k>n+1}}{(1 - \alpha_{n+1}\beta_{n+1})(1 + G_{11}(n+1,n))G_{11}(n+1,n+1)}\label{AL nU},\\
\tfrac{\delta_{nk} + G_{11}(n+1,k)}{1 + G_{11}(n+1,n)} &= \tfrac{G_{11}(n,k)}{G_{11}(n,n)} + \tfrac{\alpha_nG_{21}(n+1,k)\bbo_{k<n}}{(1 - \alpha_n\beta_n)G_{11}(n,n)(1 + G_{11}(n+1,n))},\label{AL nD2}\\
\tfrac{\delta_{n+1,k} + G_{11}(k,n)}{1 + G_{11}(n+1,n)} &= \tfrac{G_{11}(k,n+1)}{G_{11}(n+1,n+1)} + \tfrac{\beta_{n+1}G_{12}(k,n)\bbo_{k>n+1}}{(1 - \alpha_{n+1}\beta_{n+1})G_{11}(n+1,n+1)(1 + G_{11}(n+1,n))},\label{AL nU2}
\end{align}
and the denominators of these expressions are non-zero.
\end{lem}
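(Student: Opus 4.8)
The plan is to construct $\mathbf L^{-1}$ by a Neumann series about the free operator, read off the four kernel entries, and then extract every remaining assertion from two first-order recurrences satisfied by the matrix-valued Green's function. Writing $\mathbf L = \mathbf L_0 + \mathbf A$ with $\mathbf A = \bigl[\begin{smallmatrix} 0 & \alpha \\ \beta & 0\end{smallmatrix}\bigr]$, Lemma~\ref{l:S} gives $\mathbf L_0^{-1} = \mathrm{diag}\bigl((z-S)^{-1},\,-(S-z^{-1})^{-1}\bigr)$ and hence $\mathbf A\mathbf L_0^{-1} = \bigl[\begin{smallmatrix} 0 & -\Lambda \\ \Gamma & 0\end{smallmatrix}\bigr]$, whose square is $\mathrm{diag}(-\Lambda\Gamma,\,-\Gamma\Lambda)$. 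To force convergence I control $\Lambda,\Gamma$: combining \eqref{AL HS} with the elementary bound $\|\alpha\|_{\ell^2}^2 \le |M|e^{|M|}$ (valid in both signs, since $-\log(1-\alpha_n\beta_n)$ dominates a fixed multiple of $|\alpha_n|^2$ on the relevant range), membership of $\alpha$ in $B_\delta^z$ yields $\|\Lambda\|_{\I_2}\|\Gamma\|_{\I_2} < \delta^2$. Thus $\|\mathbf A\mathbf L_0^{-1}\|_\op < \delta < 1$, so $\mathbf L^{-1} = \mathbf L_0^{-1}\sum_{k\ge 0}(-\mathbf A\mathbf L_0^{-1})^k$ converges with $\mathbf L^{-1}-\mathbf L_0^{-1}\in\I_2$. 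Computing the even and odd powers of the off-diagonal operator $\mathbf A\mathbf L_0^{-1}$ and left-multiplying by $\mathbf L_0^{-1}$ reproduces the four stated series, where $(\Lambda\Gamma)^k\Lambda = \Lambda(\Gamma\Lambda)^k$ is used to match the printed ordering.

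The engine for the algebraic claims is a pair of first-order recurrences. Pairing $\mathbf L\mathbf L^{-1} = \mathbf I$ and $\mathbf L^{-1}\mathbf L = \mathbf I$ against $\delta_n,\delta_m$, and recalling $\mathbf L = \mathbf U - \mathbf S$ from \eqref{AL L}, gives the $2\times2$ matrix identities $G(n+1,m) = U_nG(n,m) - \delta_{nm}I$ and $G(n,m-1) = G(n,m)U_m - \delta_{nm}I$, where $U_n = \bigl[\begin{smallmatrix} z & \alpha_n \\ \beta_n & z^{-1}\end{smallmatrix}\bigr]$ has $\det U_n = 1-\alpha_n\beta_n$.

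For \eqref{detID}, fix $m$ and set $g(n) = G(n,m)$; away from $n=m$ the row recurrence is homogeneous, so $\det g(n+1) = (1-\alpha_n\beta_n)\det g(n)$. Since $g(n)\to 0$ as $n\to+\infty$ (the free kernel \eqref{G0-AL} decays and $G-G_0\in\I_2$) while $\prod_n(1-\alpha_n\beta_n)$ converges to a nonzero limit (as $\alpha\in\ell^2$), iterating forces $\det g(n)=0$ for $n>m$; running the recurrence toward $-\infty$ disposes of $n\le m$. For \eqref{trID}, evaluating both recurrences on the diagonal gives $G(n+1,n) = U_nG(n,n) - I = G(n+1,n+1)U_{n+1} - I$, so $U_nG(n,n) = G(n+1,n+1)U_{n+1}$ and, by cyclicity, $\tr\bigl(U_nG(n,n)\bigr)$ is independent of $n$. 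Letting $n\to+\infty$, where $U_n\to\mathrm{diag}(z,z^{-1})$ and $G(n,n)\to G_0(n,n)=\mathrm{diag}(z^{-1},0)$, identifies this constant as $1$, whence $\tr G(n+1,n) = \tr\bigl(U_nG(n,n)\bigr) - 2 = -1$.

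Finally, the four identities \eqref{AL nD}--\eqref{AL nU2} and the nonvanishing of their denominators are the crux, and where I expect the real work to lie. Nonvanishing is the easy half: the series give $G_{11}(n,n) = z^{-1}+O(\delta)$ and, using \eqref{trID}, $1+G_{11}(n+1,n) = -G_{22}(n+1,n) = 1+O(\delta)$, both nonzero for $|z|\ge 2$ and $\delta$ small, while $1-\alpha_n\beta_n>0$ always. For the identities themselves I would mirror the elementary argument of Lemma~\ref{L:Toda G}: introduce a discrete Wronskian-type bilinear in the entries of $G$ whose consecutive differences telescope by the two recurrences, sum it across the lattice using the decay established above, and solve for the neighbouring ratios. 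The shift from $n$ to $n+1$, the additive $\delta_{nk}$ terms, and the factors $(1-\alpha_n\beta_n)^{-1}$ should emerge respectively from the shift structure of $\mathbf S$, from the inhomogeneity $-\delta_{nm}I$, and from $\det U_n = 1-\alpha_n\beta_n$. The bookkeeping is more delicate than in the Toda case because $G_{11}$ no longer obeys a scalar three-term recurrence (the Schur complement of $\mathbf L$ is nonlocal), so $G_{12},G_{21},G_{22}$ must be carried throughout; keeping the cutoffs $\bbo_{k<n}$ and $\bbo_{k>n+1}$ correctly aligned is, I expect, the main obstacle.
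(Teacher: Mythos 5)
The part of the lemma you actually prove is sound, but you stop short of the crux. The four identities \eqref{AL nD}--\eqref{AL nU2} are never established: you describe a plan (``introduce a discrete Wronskian-type bilinear \dots{} and solve for the neighbouring ratios'') and correctly predict that the bookkeeping is the main obstacle, but predicting the obstacle is not surmounting it. These identities are the substantive content of the lemma --- they are exactly what is used later to compute $\partial_t\rho_n$ and $z\partial_z\rho_n$ in the proof of Theorem~\ref{T:AL} --- and the missing work is nontrivial because one must find the right bilinears. The paper's proof takes $I_n(m,k)=G_{11}(n,m)G_{22}(n,k)-G_{12}(n,k)G_{21}(n,m)$ and $J_n(m,k)=G_{11}(m,n)G_{22}(k,n)-G_{12}(m,n)G_{21}(k,n)$ (note how $m$ and $k$ are crossed between the two products), shows from the row/column recurrences that $I_{n+1}(m,k)=(1-\alpha_n\beta_n)I_n(m,k)-\delta_{nk}G_{11}(n+1,m)-\delta_{nm}G_{22}(n+1,k)$ (and similarly for $J$), deduces from decay that $I_n(m,k)$ vanishes when $n\le\min\{m,k\}$ or $n>\max\{m,k\}$, then sets $m=n$ and uses that $G$ is a right (resp.\ left) inverse of $\mathbf L$ to obtain \eqref{AL nD} (resp.\ \eqref{AL nU}); the companion identities \eqref{AL nD2} and \eqref{AL nU2} require two further bilinears $\widetilde I_n,\widetilde J_n$ built from $(G_{11},G_{21})$ and $(G_{11},G_{12})$ alone. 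None of this appears in your proposal, so the lemma is at best half proven.

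A secondary, fixable error: your claim $\|\mathbf A\mathbf L_0^{-1}\|_\op<\delta<1$ is false in general. Since $\mathbf A\mathbf L_0^{-1}$ is block off-diagonal, its operator norm is $\max(\|\Lambda\|_\op,\|\Gamma\|_\op)$, and \eqref{AL HS} together with the definition of $B_\delta^z$ only gives $\|\Lambda\|_{\I_2}\le\sqrt{|z|}\,\delta$; a single-site $\alpha$ shows this is sharp, so $\|\Lambda\|_\op$ can exceed $1$ once $|z|\gg\delta^{-2}$. Convergence of the Neumann series must instead be run through the square, $(\mathbf A\mathbf L_0^{-1})^2=\mathrm{diag}(-\Lambda\Gamma,-\Gamma\Lambda)$, whose norm is at most $\|\Lambda\|_{\I_2}\|\Gamma\|_{\I_2}<\delta^2$ --- precisely the geometric-mean bound you yourself derived --- and then by grouping the terms of the series in pairs. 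On the positive side, your proof of \eqref{detID} (telescoping the determinant via $\det U_n=1-\alpha_n\beta_n$ and decay at $\pm\infty$) is correct and is essentially the paper's argument in different clothing, the nonvanishing of the denominators is handled correctly, and your derivation of \eqref{trID} --- constancy of $\tr\bigl(U_nG(n,n)\bigr)$ by cyclicity, evaluated in the limit $n\to+\infty$ --- is a valid alternative to the paper's route, which instead deduces \eqref{trID} from \eqref{detID} by taking determinants in $1+G(n+1,n)=U_nG(n,n)$.
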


\begin{proof}
The series for the entries of $G(n,m)$ are a simple matter of book-keeping in the Neumann series.
The question of convergence is settled by the fact that
\[
\|\alpha\|_{\ell^2}^2\leq |M|e^{|M|}\leq \tfrac{|z|^2-1}{|z|}\delta^2,
\]
and so from \eqref{AL HS},
\begin{equation}\label{AL HS small}
\| \Lambda \|_{\I_2}^2 \leq |z|\delta^2\qtq{and}  \|\Gamma\|_{\I_2}^2 \leq \tfrac1{|z|}\delta^2.
\end{equation}

Using that
\[
\|(z - S)^{-1}\|_{\op}\leq \tfrac1{|z| - 1}\qtq{and}\|(S - z^{-1})^{-1}\|_{\op}\leq \tfrac{|z|}{|z| - 1},
\]
we have 
\begin{align}
\sum\limits_{n,m}|G_{12}(n,m)|^2 &\lesssim \Bigl\{ \|(z-S)^{-1}\|_{\op}\|\Lambda\|_{\I_2}\sum_{\ell\geq 0}\|\Lambda\|_{\I_2}^\ell\|\Gamma\|_{\I_2}^\ell\Bigr\}^2\lesssim \tfrac{|z|}{(|z|-1)^2}\delta^2,\label{g12}\\
\sum\limits_{n,m}|G_{21}(n,m)|^2 &\lesssim \Bigl\{\|(S-z^{-1})^{-1}\|_{\op}\|\Gamma\|_{\I_2}\sum_{\ell\geq 0}\|\Lambda\|_{\I_2}^\ell\|\Gamma\|_{\I_2}^\ell\Bigr\}^2\lesssim \tfrac{|z|}{(|z|-1)^2}\delta^2,\label{g21}
\end{align}
and recalling \eqref{G0-AL},
\begin{align}
&\sum\limits_{n,m}|G_{11}(n,m) - z^{n-m-1}\bbo_{n\leq m}|^2 \notag\\
&\qquad\qquad \qquad\qquad\lesssim \Bigl\{ \|(z-S)^{-1}\|_{\op}\sum_{\ell\geq 1}\|\Lambda\|_{\I_2}^\ell\|\Gamma\|_{\I_2}^\ell\Bigr\}^2\lesssim \tfrac1{(|z| - 1)^2}\delta^4.\label{g11-HS}
\end{align}

By observing that all but the first terms the Neumann series are products of Hilbert--Schmidt operators,  the Cauchy--Schwarz inequality yields
\begin{align}
&\sup\limits_m\sum\limits_n|G_{11}(n,n+m) - z^{-m-1}\bbo_{m\geq 0}|\notag\\
&\qquad\qquad \qquad\qquad\lesssim \|(z - S)^{-1}\|_{\op}\sum_{\ell\geq 1}\|\Lambda\|_{\I_2}^\ell\|\Gamma\|_{\I_2}^\ell\lesssim \tfrac1{|z| - 1}\delta^2,\label{g11}\\
&\sup\limits_m\sum\limits_n|G_{22}(n,n+m) + z^{m+1}\bbo_{m < 0}|\notag\\
&\qquad\qquad\qquad\qquad\lesssim \|(S - z^{-1})^{-1}\|_{\op}\sum_{\ell\geq 1}\|\Lambda\|_{\I_2}^\ell\|\Gamma\|_{\I_2}^\ell\lesssim \tfrac{|z|}{|z| - 1}\delta^2.\label{g22}
\end{align} 

The remaining identities can again be derived by considering the Jost solutions. However, as in the preceding section, we instead choose to derive these from elementary identities involving the Green's function. We introduce
\begin{align*}
I_n(m,k) &:= G_{11}(n,m)G_{22}(n,k) - G_{12}(n,k)G_{21}(n,m),\\
J_n(m,k) &:= G_{11}(m,n)G_{22}(k,n) - G_{12}(m,n)G_{21}(k,n),
\end{align*}
and note that \(I_n,J_n\to 0\) as \(n\to\pm\infty\). As \(G\) inverts \(\mb L\), we have
\begin{align}
I_{n+1}(m,k) &= (1 - \alpha_n\beta_n)I_n(m,k) - \delta_{nk}G_{11}(n+1,m) - \delta_{nm}G_{22}(n+1,k),\label{In}\\
J_{n-1} (m,k)&= (1 - \alpha_n\beta_n)J_n (m,k)- \delta_{nk}G_{11}(m,n-1) - \delta_{nm}G_{22}(k,n-1).\label{Out}
\end{align}
In particular
\begin{align}
I_n(m,k) = 0\qtq{if}n\leq \min\{m,k\}\qtq{or}n>\max\{m,k\},\label{In0}\\
J_n(m,k) = 0\qtq{if}n<\min\{m,k\}\qtq{or}n\geq \max\{m,k\}.\label{Jn0}
\end{align}

The identity \eqref{detID} follows from taking \(k=m\) in the formula defining \(I_n\)  and invoking \eqref{In0}.  The identity \eqref{trID} follows from \eqref{detID} by taking the determinant of both sides of the expression
\[
1 + G(n+1,n) = U_nG(n,n).
\]

Next we turn to the proof of \eqref{AL nD} through \eqref{AL nU2}. As \(|z|\geq 2\) we may use \eqref{g11} to show that
\eq{Non-deg}{
|G_{11}(n,n)|\gtrsim \tfrac1{|z|}\qtq{and}|1 + G_{11}(n+1,n)|\gtrsim1,
}
uniformly in \(n\), whenever \(\delta>0\) is sufficiently small. Consequently, the denominators appearing in \eqref{AL nD} through \eqref{AL nU2} do not vanish.

To prove \eqref{AL nD}, we use \eqref{In} with \(m=n\) together with \eqref{In0} to obtain
\[
G_{11}(n,n)G_{22}(n,k) - G_{12}(n,k)G_{21}(n,n) = \bbo_{k<n}\tfrac{G_{22}(n+1,k)}{1 - \alpha_n\beta_n}.
\]
As \(G\) is a right inverse for \(\mb L\), we then have
\begin{align*}
&G_{11}(n,n)G_{12}(n+1,k)\\
&\qquad = zG_{11}(n,n)G_{12}(n,k) + \alpha_nG_{11}(n,n)G_{22}(n,k)\\
&\qquad = zG_{11}(n,n)G_{12}(n,k) + \alpha_n\Bigl[G_{12}(n,k)G_{21}(n,n) + \bbo_{k<n}\tfrac{G_{22}(n+1,k)}{1 - \alpha_n\beta_n}\Bigr]\\
&\qquad = \Bigl[1 + G_{11}(n+1,n)\Bigr]G_{12}(n,k) + \bbo_{k<n}\tfrac{\alpha_nG_{22}(n+1,k)}{1 - \alpha_n\beta_n},
\end{align*}
which gives \eqref{AL nD}.  The proof of \eqref{AL nU} is similar, using \eqref{Out} with $m=n$ together with \eqref{Jn0} to obtain
\[
G_{11}(n,n)G_{22}(k,n) - G_{12}(n,n)G_{21}(k,n) = \bbo_{k>n}\tfrac{G_{22}(k,n-1)}{1 - \alpha_n\beta_n},
\]
then using that \(G\) is a left inverse for \(\mb L\), and finally replacing \(n\) by \(n+1\).

The proof of \eqref{AL nD2} and \eqref{AL nU2} is essentially identical to the proof of \eqref{AL nD} and \eqref{AL nU}, except we replace \(I_n\), \(J_n\) by
\begin{align*}
\widetilde I_n(m,k) &= G_{11}(n,m)G_{21}(n,k) - G_{11}(n,k)G_{21}(n,m),\\
\widetilde J_n(m,k) &= G_{11}(m,n)G_{12}(k,n) - G_{12}(m,n)G_{11}(k,n),
\end{align*}
and compute
\begin{align*}
\widetilde I_{n+1}(m,k) &= (1 - \alpha_n\beta_n)\widetilde I_n(m,k) - \delta_{nk}G_{21}(n+1,m) - \delta_{nm}G_{21}(n+1,k),\\
\widetilde J_{n-1}(m,k) &= (1 - \alpha_n\beta_n)\widetilde J_n(m,k) - \delta_{nk}G_{12}(m,n-1) - \delta_{nm}G_{12}(k,n-1).
\end{align*}
Again setting \(m = n\), we obtain the identities
\begin{align*}
G_{11}(n,n)G_{21}(n,k) - G_{11}(n,k)G_{21}(n,n) &= \bbo_{k<n}\tfrac{G_{21}(n+1,k)}{1 - \alpha_n\beta_n},\\
G_{11}(n,n)G_{12}(k,n) - G_{12}(n,n)G_{11}(k,n) &= \bbo_{k>n}\tfrac{G_{12}(k,n-1)}{1 - \alpha_n\beta_n},
\end{align*}
and arguing as before, the first gives \eqref{AL nD2}, and the second yields \eqref{AL nU2}.
\end{proof}

Having constructed the Green's function for \(\mb L\), we now turn to the perturbation determinant:

\begin{lem}\label{l:logging}
There exists \(\delta>0\) so that the series
\begin{equation}\label{AL alpha}
- \log \det \bigl[ \mathbf L \, \mathbf L_0^{-1} \bigr]  = \sum_{\ell=1}^\infty \tfrac{1}{\ell} (-1)^{\ell} \tr\Bigr\{\bigl[\Gamma \Lambda\bigr]^\ell \Bigr\}
\end{equation}
converges uniformly for \(|z|\geq2\) and $\alpha_m\in B_\delta^z$. Further, we have the identity
\begin{align}
 z\partial_z \log \det \bigl[ \mathbf L \, \mathbf L_0^{-1} \bigr]  &= \tr \bigl\{ \bigl(\mathbf L^{-1} - \mathbf L_0^{-1}\bigr) \mathbf S\sigma_3\bigr\}.\label{AL diff z}
\end{align}
\end{lem}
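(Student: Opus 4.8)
The plan is to prove the two assertions in turn, the convergent series expansion first and the differentiation identity second. For the series \eqref{AL alpha}, I would start from the factorization
\[
\mathbf L \, \mathbf L_0^{-1} = (\mathbf U - \mathbf S)\mathbf L_0^{-1} = \mathbf I + (\mathbf U - \mathbf U_0)\mathbf L_0^{-1},
\]
since $\mathbf L_0 = \mathbf U_0 - \mathbf S$ and hence $\mathbf S\mathbf L_0^{-1} = \mathbf U_0\mathbf L_0^{-1} - \mathbf I$. The perturbation $\mathbf U - \mathbf U_0$ is the off-diagonal operator with entries $\alpha$ and $\beta$, so $(\mathbf U - \mathbf U_0)\mathbf L_0^{-1}$ can be read off directly from \eqref{G0-AL}: in block form it is built from $\Lambda = \alpha(S - z^{-1})^{-1}$ and $\Gamma = \beta(z - S)^{-1}$. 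The key computational point is that the trace of every positive power of this block operator collapses to a power of the \emph{scalar} combination $\Gamma\Lambda$ (or equivalently $\Lambda\Gamma$, which has the same trace): the off-diagonal structure forces alternation between the two factors, and the diagonal blocks contribute nothing to the trace at each order. Feeding this into the standard expansion $\log\det(\mathbf I + A) = \tr\log(\mathbf I + A) = \sum_{\ell\geq1}\tfrac{(-1)^{\ell+1}}{\ell}\tr A^\ell$ yields \eqref{AL alpha}. Uniform convergence for $|z|\geq 2$ and $\alpha_m\in B_\delta^z$ is then immediate from the Hilbert--Schmidt bounds \eqref{AL HS small}: since $\Gamma\Lambda$ is a product of two Hilbert--Schmidt operators it is trace class, with $|\tr[\Gamma\Lambda]^\ell|\leq \|\Gamma\|_{\I_2}^\ell\|\Lambda\|_{\I_2}^\ell\leq \delta^{2\ell}$, so the series is dominated by a convergent geometric series once $\delta$ is small.

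For the differentiation identity \eqref{AL diff z}, I would differentiate the logarithm of the perturbation determinant using the general formula $z\partial_z\log\det[\mathbf L\,\mathbf L_0^{-1}] = \tr\{\mathbf L^{-1}(z\partial_z\mathbf L) - \mathbf L_0^{-1}(z\partial_z\mathbf L_0)\}$, valid here because the trace-class convergence established above legitimizes termwise differentiation. From \eqref{AL L} we read off $z\partial_z\mathbf L = \bigl[\begin{smallmatrix} z & 0 \\ 0 & -z^{-1}\end{smallmatrix}\bigr]\otimes I$, which is precisely $\mathbf S\sigma_3$ evaluated against the diagonal structure---more carefully, $z\partial_z\mathbf U = \mathbf U\sigma_3$ acting blockwise, and subtracting the shift (which is $z$-independent) leaves $z\partial_z\mathbf L = z\partial_z\mathbf U$. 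The cleanest route is to verify the operator identity $z\partial_z\mathbf L = \mathbf S\sigma_3 + (\text{a piece that cancels in the renormalized trace})$; the shift enters because $z\partial_z(z - S) = z$ and $z\partial_z(z^{-1} - S) = -z^{-1}$, and these combine with $\sigma_3$ and the shift $\mathbf S$ exactly as in the right-hand side of \eqref{AL diff z}. Substituting and using that the $\mathbf L_0^{-1}$ term accounts for the subtraction, one obtains $\tr\{(\mathbf L^{-1} - \mathbf L_0^{-1})\mathbf S\sigma_3\}$.

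The main obstacle I anticipate is the bookkeeping in matching $z\partial_z\mathbf L$ to the operator $\mathbf S\sigma_3$ appearing on the right of \eqref{AL diff z}, rather than to the more naive $z\partial_z\mathbf U$. The subtlety is that the shift $\mathbf S$ in $\mathbf S\sigma_3$ is genuinely present and does not simply equal the diagonal derivative of $\mathbf U$; reconciling these requires using the relation between $\mathbf L$ and $\mathbf S^{-1}\mathbf U - \mathbf I$ together with the vanishing identity \eqref{Vacuous}, which guarantees that the trace of $(\mathbf L^{-1} - \mathbf L_0^{-1})\mathbf S$ (without $\sigma_3$) is zero. In effect, one is free to insert or remove multiples of $\mathbf S$ inside the renormalized trace at no cost, and it is this freedom---secured by \eqref{trID} and \eqref{Vacuous}---that lets the derivative of the determinant be expressed with the particular operator $\mathbf S\sigma_3$. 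I would therefore devote the most care to making this insertion rigorous, ensuring at each step that the operators whose traces I manipulate are genuinely trace class so that cyclicity and the vanishing identity may be applied without qualm.
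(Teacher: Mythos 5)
Your first paragraph (the series \eqref{AL alpha}) is essentially the paper's argument, just spelled out in more detail: the paper simply cites \eqref{AL HS small} for convergence, and the block off-diagonal collapse you describe is the intended justification of the identity. One bookkeeping point you gloss over: $(\mathbf L - \mathbf L_0)\mathbf L_0^{-1} = \bigl[\begin{smallmatrix}0 & -\Lambda \\ \Gamma & 0\end{smallmatrix}\bigr]$ (note $\Lambda = -\alpha(z^{-1}-S)^{-1}$), and it is this minus sign, fed through $\tr\{A^{2k}\} = 2(-1)^k\tr\{[\Gamma\Lambda]^k\}$, that produces the factor $(-1)^\ell$ in \eqref{AL alpha}.

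The genuine gap is in your proof of \eqref{AL diff z}. Since $\mathbf L - \mathbf L_0$ is $z$-independent, your differentiation formula yields $z\partial_z\log\det[\mathbf L\,\mathbf L_0^{-1}] = \tr\{(\mathbf L^{-1}-\mathbf L_0^{-1})\,z\partial_z\mathbf L\}$ with $z\partial_z\mathbf L = \bigl[\begin{smallmatrix}z & 0\\ 0 & -z^{-1}\end{smallmatrix}\bigr] = \mathbf U_0\sigma_3$, and to reach \eqref{AL diff z} you must show the discrepancy vanishes:
$\tr\{(\mathbf L^{-1}-\mathbf L_0^{-1})(\mathbf U_0-\mathbf S)\sigma_3\} = \tr\{(\mathbf L^{-1}-\mathbf L_0^{-1})\mathbf L_0\sigma_3\} = \sum_n\bigl[\alpha_nG_{21}(n,n)-\beta_nG_{12}(n,n)\bigr] = 0$.
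This is where your appeal to \eqref{Vacuous} fails: that identity concerns $\mathbf S = S\otimes I$ \emph{without} $\sigma_3$, while the discrepancy operator $\mathbf L_0\sigma_3$ is not a multiple of $\mathbf S$ (it equals $(z-S)\oplus(S-z^{-1})$); you are not merely ``inserting or removing multiples of $\mathbf S$,'' and neither \eqref{Vacuous} nor \eqref{trID} supplies the vanishing you need. The needed identity is true but nontrivial: from \eqref{G inverts L} one derives $\alpha_nG_{21}(n,n)-\beta_nG_{12}(n,n) = \tfrac12(\gamma_n-\gamma_{n-1})$, so the sum telescopes to zero since $\gamma_n\in\ell^1$ --- but this is precisely a computation the paper performs only later, inside the proof of Theorem~\ref{T:AL}, so you cannot cite it as known at this stage. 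The paper's own proof avoids the issue entirely: it differentiates the series \eqref{AL alpha} term by term using $z\partial_z\Lambda = \Lambda - \Lambda S(S-z^{-1})^{-1}$ and $z\partial_z\Gamma = -\Gamma - \Gamma S(z-S)^{-1}$, then uses cyclicity of the trace to recognize the resulting series as the Neumann expansion (Lemma~\ref{L:AL GF}) of $\tr\{(\mathbf L^{-1}-\mathbf L_0^{-1})\mathbf S\sigma_3\}$; every object manipulated there is genuinely trace class, whereas your route must additionally contend with $\mathbf L^{-1}-\mathbf L_0^{-1}$ being only Hilbert--Schmidt. To repair your argument, either prove the telescoping identity above as a preliminary step, or switch to termwise differentiation of the series.
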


\begin{proof}
Convergence of the series \eqref{AL alpha} follows from \eqref{AL HS small}. This also guarantees that we can compute the left-hand side of \eqref{AL diff z} by summing the derivatives of the summands in  \eqref{AL alpha}.  Thus, \eqref{AL diff z} follows from
\begin{align*}
z \partial_z \Lambda = \Lambda - \Lambda S(S-z^{-1})^{-1}, \qquad
z \partial_z \Gamma  = - \Gamma - \Gamma S(z-S)^{-1},
\end{align*}
and the resolvent expansions derived in Lemma~\ref{L:AL GF}
\end{proof}

Using that $G$ inverts $\mathbf L$, we have
\begin{align}\label{G inverts L}
U_{n}G(n,n) = 1+ G(n+1,n)=G(n+1,n+1)U_{n+1}.
\end{align}
Employing the top left entries in \eqref{G inverts L} as well as \eqref{trID}, we may write
\begin{align}
\rho_n &= -\tfrac12\log\Bigl[1 - 2\tfrac{\alpha_nG_{21}(n,n)}{2 + \gamma_n}\Bigr]-\tfrac12\log\Bigl[1 - 2\tfrac{\beta_{n+1}G_{12}(n+1,n+1)}{2 + \gamma_n}\Bigr]\label{rho-alt-1}\\
&= \tfrac12\log\Biggl[\tfrac{\bigl[1 + G_{11}(n+1,n)\bigr]^2}{G_{11}(n,n) G_{11}(n+1,n+1)}\Biggr] - \log z,\label{rho-alt}
\end{align}
where these expressions are seen to be well-defined using \eqref{Non-deg}. While RHS\eqref{rho-alt-1} demonstrates the link to the continuum case, the expression RHS\eqref{rho-alt} will prove to be the most useful in our proof of Theorem~\ref{T:AL}.

\begin{proof}[Proof of Theorem~\ref{T:AL}] From \eqref{trID} we obtain
\[
\gamma_n = 2G_{11}(n+1,n).
\]
Thus \eqref{g11} shows that \(\gamma_n\in \ell^1\). Similarly, recalling \eqref{g12}, \eqref{g21}, and \eqref{Non-deg} we see that \(\rho_n\in \ell^1\).

Next we turn to the derivation of the currents. Applying \eqref{AL dtG} we obtain
\[
\partial_t\gamma_n = 2i\bigl[z\alpha_{n+1} - z^{-1}\alpha_n\bigr]G_{21}(n+1,n) - 2i\bigl[z\beta_n - z^{-1}\beta_{n+1}\bigr]G_{12}(n+1,n).
\]
As $G$ inverts $\mathbf L$, we have
\[
U_{n+1}G(n+1,n) = G(n+2,n)\qtq{and} G(n+1,n)U_n = G(n+1,n-1).
\]
Using the diagonal entries in these identities we get \(\partial_t \gamma_n={}^\gamma\!j_{n+1}-{}^\gamma\!j_n\). The fact that \({}^\gamma\!j_n\in \ell^1\) follows from \eqref{g11} and \eqref{g22}.

Recalling \eqref{rho-alt}, we may use \eqref{AL dtG} to compute \(\partial_t\rho_n\) and simplify using the identities \eqref{AL nD} and \eqref{AL nU}. The fact that \(j_n\in \ell^1\) follows from \eqref{g12}, \eqref{g21}, and \eqref{Non-deg}.

The expression \eqref{AL conserve tr} follows directly from the definition of \(\gamma_n\), so it remains to prove \eqref{AL conserve det}.  Differentiating with respect to the spectral parameter, we get
\[
z\partial_zG_{11}(n,m) = - z\sum_k G_{11}(n,k)G_{11}(k,m) + z^{-1}\sum_kG_{12}(n,k)G_{21}(k,m),
\]
so from \eqref{rho-alt} we obtain
\begin{align*}
z\partial_z \rho_n &= \tfrac z2\sum_k\Bigl[\tfrac{G_{11}(n,k)G_{11}(k,n)}{G_{11}(n,n)} + \tfrac{G_{11}(n+1,k)G_{11}(k,n+1)}{G_{11}(n+1,n+1)} - 2\tfrac{G_{11}(n+1,k)G_{11}(k,n)}{1+G_{11}(n+1,n)}\Bigr]\\
&\quad - \tfrac1{2z}\sum_k\Bigl[\tfrac{G_{12}(n,k)G_{21}(k,n)}{G_{11}(n,n)} + \tfrac{G_{12}(n+1,k)G_{21}(k,n+1)}{G_{11}(n+1,n+1)} - 2\tfrac{G_{12}(n+1,k)G_{21}(k,n)}{1 + G_{11}(n+1,n)}\Bigr] - 1.
\end{align*}
We then apply \eqref{AL nD}, \eqref{AL nD2} whenever \(k\geq n\) and \eqref{AL nU}, \eqref{AL nU2} whenever \(k<n\) to replace the last term in each summand. This yields the expression
\begin{align*}
z\partial_z \rho_n &= \psi_{n+1} - \psi_n + z\tfrac{G_{11}(n+1,n)G_{11}(n,n+1)}{G_{11}(n+1,n+1)} + z\tfrac{G_{11}(n,n)}{1 + G_{11}(n+1,n)}\\
&\quad - z^{-1}\tfrac{G_{12}(n+1,n)G_{21}(n,n+1)}{G_{11}(n+1,n+1)}  - 1,
\end{align*}
where we define
\[
\psi_n := \tfrac 12\sum_k\sgn(k+\tfrac12 - n)\Bigl[z\tfrac{G_{11}(n,k)G_{11}(k,n)}{G_{11}(n,n)} - \tfrac1z\tfrac{G_{12}(n,k)G_{21}(k,n)}{G_{11}(n,n)}\Bigr] - \tfrac12.
\]

To further simplify our expression for \(z\partial_z \rho_n\), we use \eqref{AL nD}, \eqref{AL nU}, \eqref{AL nU2}, and \eqref{detID} to write
\begin{align*}
&z\tfrac{G_{11}(n+1,n)G_{11}(n,n+1)}{G_{11}(n+1,n+1)} + z\tfrac{G_{11}(n,n)}{1 + G_{11}(n+1,n)} - z^{-1}\tfrac{G_{12}(n+1,n)G_{21}(n,n+1)}{G_{11}(n+1,n+1)}\\
&\qquad = zG_{11}(n,n) -z^{-1}G_{22}(n,n).
\end{align*}
The identity \eqref{G inverts L} gives the expressions
\begin{align*}
zG_{11}(n,n) - z^{-1}G_{22}(n,n) &= \beta_n G_{12}(n,n) - \alpha_n G_{21}(n,n) + \bigl[1 + \gamma_n\bigr]\\
&= \alpha_n G_{21}(n,n) - \beta_n G_{12}(n,n) + \bigl[1 + \gamma_{n-1}\bigr],
\end{align*}
so, after taking the mean of the right-hand sides, we arrive at
\[
z\partial_z \rho_n = \psi_{n+1} - \psi_n + \tfrac12\bigl[\gamma_n + \gamma_{n-1}\bigr].
\]

Recalling \eqref{Non-deg}, we may apply \eqref{g12}--\eqref{g11} to see that \(\psi_n\in \ell^1\). This gives
\[
z\partial_z\sum_n\rho_n = \sum_n\gamma_n,
\]
which combined with \eqref{AL conserve tr} and \eqref{AL diff z} yields
\begin{align}\label{deriv rho}
\partial_z\sum_n\rho_n = \partial_z\log\det\bigl[\mb L\mb L_0^{-1}\bigr].
\end{align}
Using Lemma~\ref{l:logging} and \eqref{AL HS}, we find that
\[
\lim\limits_{z\to\infty}\log\det\bigl[\mb L\mb L_0^{-1}\bigr]= 0 = \lim\limits_{z\to\infty}\sum_n\rho_n.
\]
Thus, integrating \eqref{deriv rho} with respect to the spectral parameter we obtain \eqref{AL conserve det}.
\end{proof}

As in the continuum case, the quadratic part of the current \(j_n\) associated to \(\rho_n\) has good coercivity properties.  This is what is needed for proving local smoothing.

Let us define the Fourier transform of a sequence, say $\alpha_n$, via
$$
\hat\alpha(\theta):= \sum_n \alpha_ne^{in\theta}.
$$
Then, writing \(j^{[2]}\) for the quadratic part of the current, we have positive-definite expression
\begin{align}\label{E:Im j}
\sum_n \Im j_n^{[2]} = \mp \int_0^{2\pi}\frac{2z^2\sin^2(\theta)}{|z^2-e^{i\theta}|^2} |\hat\alpha(\theta)|^2\,\frac{d\theta}{2\pi}.
\end{align}
This is the analogue of \cite[Lemma~4.8]{HGKV} relevant to \eqref{AL eqn}.

The real part of $\rho_n$ is not itself coercive in the same sense. However, this is readily remedied by switching to
$$
\tilde \rho_n = \rho_n - \tfrac12\log\bigl(1-\alpha_n\beta_n) \qtq{with current} \tilde \jmath_n = j_n +\tfrac{i}2(\alpha_n\beta_{n-1}-\alpha_{n-1}\beta_{n}).
$$
Specifically, we find that
\begin{align}\label{E:Re rho}
\sum_n \Re \tilde\rho_n^{[2]} = \pm \int_0^{2\pi}\frac{z^4 - 1}{2|z^2-e^{i\theta}|^2} |\hat\alpha(\theta)|^2\,\frac{d\theta}{2\pi}.
\end{align}
Advantageously, this modification does not affect the coercivity \eqref{E:Im j} at all; indeed, $\sum \Im (j_n-\tilde\jmath_n) =0$.  Moreover, this modification drops out in the continuum limit.

\bibliographystyle{habbrv}
\bibliography{refs}

\end{document}